\documentclass[10pt, a4paper, oneside, reqno]{amsproc}
\usepackage{mathtools}
\usepackage{amsthm}
\usepackage{amsfonts}
\usepackage{amsmath}
\usepackage{amssymb}
\usepackage{mathrsfs}
\usepackage{euscript}   
\usepackage{color}
\usepackage{mathbbol}
\usepackage{pifont}
\usepackage{tikz}
\usepackage{blkarray}
\usepackage{hyperref}
\usepackage{url}

\numberwithin{equation}{section}
\newtheorem{theorem}{Theorem}[section]
\newtheorem{proposition}[theorem]{Proposition}
\newtheorem{corollary}[theorem]{Corollary}
\newtheorem{lemma}[theorem]{Lemma}
\newtheorem{qst}[theorem]{Question}

\newtheorem*{opq*}{\bf Problem}
\newtheorem*{SDT}{\bf Stinespring Dilation Theorem}
\newtheorem*{PropA}{\bf Proposition~A}
\newtheorem{definition}[theorem]{Definition}

\theoremstyle{remark}
\newtheorem{remark}[theorem]{Remark}

\theoremstyle{definition}
\newtheorem{example}[theorem]{Example}

\newcommand*{\D}{\mathrm{d\hspace{.1ex}}}
\newcommand*{\Ge}{\geqslant}

\newcommand*{\Le}{\leqslant}

\begin{document}
   \title[Sum of self-commutators]{Sum of self-commutators of commuting operators}
\author[S. Chavan]{Sameer Chavan}

\address{Department of Mathematics, IIT Kanpur, Uttar Pradesh, India -208016}

\email[]{chavan@iitk.ac.in}

\author[Md. R. Reza]{Md. Ramiz Reza}

\address{Department of Mathematics and Statistics, Aliah University, Kolkata -700160}

\email[]{ramiz.math@aliah.ac.in}

\author[S. S. Sequiera]{Shanola S. Sequeira}

\address{Department of Mathematics, IIT Kanpur, Uttar Pradesh, India -208016}

\email[]{shanolasequeira@gmail.com, shanolas@iitk.ac.in}

\thanks{The first author is supported by the Advanced Research Grant $\mathsf{ANRF/ARG/2025/001228MS}$}

\subjclass[2020]{Primary 47A13 Secondary 47B20}

\keywords{planar Lebesgue measure, spectrum, commutator, hyponormal, compact, quasinilpotent}
   \begin{abstract}
It is well known that every hyponormal operator on a complex Hilbert space whose spectrum has planar Lebesgue measure zero is normal. In particular, every compact hyponormal operator on a complex Hilbert space is normal. In this paper, we investigate analogous structural and spectral phenomena for so-called sum-hyponormal 
$d$-tuples on a complex Hilbert space $\mathcal H$, namely, commuting $d$-tuples ${\bf T}=(T_1, \ldots, T_d)$ satisfying
$\sum_{j=1}^d [T^*_j, T_j] \Ge 0$. 
We show that every sum-hyponormal $d$-tuple of compact operators decomposes as the direct sum of a normal $d$-tuple and a quasinilpotent sum-hyponormal $d$-tuple. As a consequence, every sum-hyponormal $d$-tuple on a finite-dimensional Hilbert space is normal. 
In contrast to the single-operator case, a sum-hyponormal $d$-tuple need not be normaloid.
Nevertheless, 
we establish a multivariable analogue of Putnam's inequality for sum-hyponormal $d$-tuples with commuting imaginary parts.
As a consequence, we prove that ${\bf T}$ is normal whenever the planar Lebesgue measure of $s(\sigma({\bf T}))$ is zero, where $\sigma({\bf T})$ denotes the Taylor spectrum of ${\bf T}$ and ${s}$ is the complex-linear polynomial with alternating coefficients $1$ and $i$.
\end{abstract}
   \maketitle

\section{Introduction and preliminaries}
This paper is motivated by Putnam's inequality (see \cite[Theorem~1]{P1970}), which asserts that the norm of the self-commutator of a hyponormal operator is bounded above by the normalized area measure of its spectrum. This striking structural result has played an important role in the development of the theory of hyponormal operators (see \cite{MP1989}). 
One of its most notable consequences is that every hyponormal operator is normal whenever the area of its spectrum is zero.
In particular, every compact hyponormal operator on a complex Hilbert space is normal (see \cite[Corollary~III.1.6]{MP1989}).
Thus, the theory of hyponormal operators is inherently infinite-dimensional. A key ingredient in the direct proof of this last assertion is the equality of the spectral radius and the operator norm for hyponormal operators (see \cite[Corollary~III.1.4]{MP1989}). An abstract analogue of this equality holds in the setting of completely positive maps. Since this generalization is not needed in the main body of the paper, we have relegated its proof to the appendix.

Before discussing higher-dimensional counterparts of Putnam's inequality and related results, we briefly introduce some notation and recall relevant notions.
Let $\mathbb Z_+$ and $\mathbb N$ denote the sets of nonnegative and positive integers, respectively. Let $\mathbb R$ and $\mathbb C$ denote the fields of real and complex numbers, respectively. For a positive integer $d$, let $X^d$ denote the $d$-fold Cartesian product of a nonempty set $X.$ The permutation group on $\{1, \ldots, d\}$ is denoted by $\mathfrak{S}_d$. 
The group $\mathfrak{S}_d$ acts on $X^d$ via
$$\eta \cdot (x_1, \ldots, x_d)  = (x_{\eta^{-1}(1)}, \ldots, x_{\eta^{-1}(d)}), \quad \eta \in \mathfrak{S}_d, ~(x_1, \ldots, x_d)\in X^d.$$ 
For each $j=1, \ldots, d,$ let $\varepsilon_j$ denote the element of $\mathbb Z^d_+$ with $1$ in the $j$th position and $0$ elsewhere. Let $\lfloor \cdot \rfloor$ denote the greatest integer function and let $\bmod$ denote the modulo operation. Let $\mathbb C[x_1, \ldots, x_d]$ denote the ring of polynomials in the variables $x_1, \ldots, x_d$ with complex coefficients. For $z = (z_1, \ldots, z_d) \in \mathbb C^d,$ we write $\bar{z}=(\bar{z}_1, \ldots, \bar{z}_d)$, 
where $\bar{w}$ denotes the complex conjugate of $w \in \mathbb C$. Let $\mathcal H$ be a complex Hilbert space, and let $\dim(\mathcal H)$ denote its Hilbert space dimension. 
For a  subset $E$ of $\mathcal H$, $\vee E$ denotes the closed linear span of $E$, with the convention that $\vee \, \emptyset=\{0\}$. For a closed subspace $\mathcal M$ of $\mathcal H$, we denote its orthogonal complement by $\mathcal H \ominus \mathcal M$. Let $\mathcal B(\mathcal H)$ denote the unital $C^*$-algebra of bounded linear operators on $\mathcal H$, with the identity operator $I$ as its unit and the adjoint operation $A \mapsto A^*$ as its involution.
Let ${\mathcal C}(\mathcal H)$ denote the norm-closed ideal of compact operators on $\mathcal H$. For $A, B \in \mathcal B(\mathcal H)$, we write $[A,B]=AB-BA$ for the commutator of $A$ and $B$.
We denote the real and imaginary parts of $A$ by $\Re(A)$ and $\Im(A)$, respectively, where
\begin{equation}
\label{real-imaginary}
\Re(A)=\frac{A+A^*}{2}, \quad \Im(A)=\frac{A-A^*}{2i}.
\end{equation} 

Let $d$ be a positive integer. A {\it commuting $d$-tuple on $\mathcal H$} is a $d$-tuple ${\bf T}=(T_1,\ldots,T_d)$ of pairwise commuting operators in $\mathcal B(\mathcal H)$; that is, $[T_j, T_k]=0$ for every $1 \Le j, k \Le d$. We denote its adjoint by ${\bf T}^*=(T^*_1, \ldots, T^*_d)$. For any commuting $d$-tuple ${\bf T}$, the following identities hold:
\begin{align}
\label{d-comm-formula}
 \big[T_j, [T^*_k, T_k]\big] = \big[[T_j, T^*_k], T_k\big],
\quad 1 \Le j, k \Le d, \\
\label{d-comm-formula-im}
 \big[\Im (T_j), \Im (T_k)\big] =\frac{i}{2}\Im\big([T^*_j, T_k]\big), 
\quad 1 \Le j, k \Le d.
\end{align}
The second-order commutation relation \eqref{d-comm-formula} is a special case of the commutator form of the Jacobi identity (see \cite[Theorem~1.1.4]{DK2000}), whereas \eqref{d-comm-formula-im} follows by expanding the commutator and using \eqref{real-imaginary}. For $\lambda=(\lambda_1, \ldots, \lambda_d) \in \mathbb C^d,$ the $d$-tuple $(T_1-\lambda_1 I, \ldots, T_d-\lambda_d I)$ is denoted by ${\bf T}-\lambda$. For $\alpha =(\alpha_1, \ldots, \alpha_d) \in \mathbb Z^d_+,$  we write ${\bf T}^{\alpha}=\prod_{j=1}^d T^{\alpha_j}_j$. We denote the joint kernel of $T_1, \ldots, T_d$ by $\ker({\bf T})=\cap_{j=1}^d \ker(T_j)$. For an invariant subspace $\mathcal M$ of ${\bf T},$ we write ${\bf T}|_{\mathcal M}$ for the $d$-tuple $(T_1|_{\mathcal M}, \ldots, T_d|_{\mathcal M})$. The notations $\sigma_p({\bf T}),$ $\sigma_{ap}({\bf T}),$ $\sigma({\bf T}),$ $\sigma_e({\bf T}),$ $r({\bf T})$ denote the point spectrum, approximate point spectrum, Taylor spectrum, essential spectrum, spectral radius of ${\bf T}$, respectively (see \cite{B1971, Cu1981, T1970} for the relevant definitions). 

An operator $A \in \mathcal B(\mathcal H)$ is said to be {\it nilpotent} if there exists a positive integer $k$ such that $A^k=0$. The smallest positive integer $k$ with this property is called the {\it nilpotency index} of $A$. 
A commuting $d$-tuple ${\bf T}$ is said to be {\it quasinilpotent} if $\sigma({\bf T})=\{0\}.$ 
We say that ${\bf T}=(T_1,\ldots,T_d)$ is {\it normal} (resp. {\it essentially normal}) if $[T_j^*,T_j]=0$ (resp. $[T_j^*,T_j] \in \mathcal C(\mathcal H)$) for every $j=1,\ldots,d$.  Furthermore, ${\bf T}$ is {\it completely nonnormal} if there is no nonzero subspace that reduces ${\bf T}$ to a normal tuple.
A commuting $d$-tuple ${\bf T}$ is said to be {\it doubly commuting} if $[T^*_j, T_k]=0$ for every $1 \Le j \neq k \Le d.$ 
Consider the completely positive map $\varphi_{\bf T}$ induced by ${\bf T}$:
\begin{equation} 
\label{varphi-T}
\varphi_{\bf T}(X) = \sum_{j=1}^d T^*_jXT_j, \quad X \in \mathcal B(\mathcal H).
\end{equation}
We say that ${\bf T}$ is {\it normaloid} if $r({\bf T})=\|\varphi_{\bf T}(I)\|^{1/2}$ (cf. \cite{CT1984, S2025}).
Following \cite{At1988}, ${\bf T}$ is said to be {\it hyponormal} if the {\it commutator matrix} $$[\![{\bf T}^*, {\bf T}]\!]:=\displaystyle \big([T_k^*,T_j]\big)_{1\Le j,k \Le d}$$ is positive semidefinite on the $d$-fold orthogonal direct sum of $\mathcal H$. Any hyponormal $d$-tuple is normaloid (see \cite[Lemma~3.10]{CS2017}). In fact,
the proof of \cite[Lemma~3.10]{CS2017} shows that
\begin{equation} \label{special-appendix}
\varphi_{\bf T}(I)^2 \Le \varphi^2_{\bf T}(I)\Longrightarrow r({\bf T})
=
\|\varphi_{\bf T}(I)\|^{1/2}.
\end{equation}
(See Proposition~A for a generalization.)
We refer the reader to \cite{At1988, B1971, CS2017, CCHZ2000, CT1984, Cu1990, DY1992, EP2001, HK2010, M2002, MPS2022, S2025, X1983} for multivariable generalizations of hyponormality and related concepts. 

We now introduce the following notion of hyponormality, which forms the main focus of this paper.
\begin{definition} Let ${\bf T}=(T_1, \ldots, T_d)$ be a commuting $d$-tuple on a complex Hilbert space $\mathcal H$. The {\it commutator} of ${\bf T}$ is defined by
\begin{equation}
\label{joint-commutator}
[{\bf T^*}, {\bf T}] := \sum_{j=1}^d  [T^*_j, T_j].
\end{equation}
We say that 
${\bf T}$ is {\it sum-normal} if $[{\bf T^*}, {\bf T}] = 0,$ and {\it sum-hyponormal} if $[{\bf T^*}, {\bf T}] \Ge 0.$
\end{definition}
\begin{remark} \label{rem-1} 
Let $T$ be a commuting $d$-tuple on $\mathcal H.$ 
\begin{enumerate}
\item[(a)] ${\bf T}$ is sum-normal if and only if both ${\bf T}$ and ${\bf T^*}$ are sum-hyponormal.
\item[(b)] 
If ${\bf T}$ is sum-hyponormal (resp. sum-normal), then ${\bf T}|_{\mathcal M}$ is sum-hyponormal (resp. sum-normal) for every invariant subspace $\mathcal M$ of ${\bf T}$. 
\item[(c)] The notions of sum-hyponormality and sum-normality are invariant under the action of the permutation group. Indeed, for any $\eta \in {\mathfrak S}_d$, 
\begin{equation*} 
[(\eta \cdot {\bf T})^*, \eta \cdot {\bf T}]=[{\bf T}^*, {\bf T}]. 
\end{equation*}
\end{enumerate}
The commutator $[{\bf T}^*, {\bf T}]$ defined in \eqref{joint-commutator} may be viewed as the {\it trace} of the operator matrix $[\![{\bf T}^*, {\bf T}]\!]$. It is worth mentioning that a notion of the determinant of $[\![{\bf T}^*, {\bf T}]\!]$ is introduced and investigated in \cite{MPS2022}.
\hfill $\diamondsuit$
\end{remark}

The present paper focuses on the following questions.
\begin{qst} \label{Q1}
Let $\mathcal H$ be a complex Hilbert space. 
\begin{enumerate}
\item[(i)]  
Is every sum-hyponormal $d$-tuple of compact operators on $\mathcal H$ necessarily sum-normal or normal?
\item[(ii)] Is every sum-normal $d$-tuple on $\mathcal H$ necessarily normal? 
\item[(iii)] Under what additional conditions is a sum-hyponormal $d$-tuple $($resp. sum-normal $d$-tuple$)$ on $\mathcal H$ necessarily hyponormal (resp. normal$)$\!?
\end{enumerate}
\end{qst}

The Drury--Arveson $d$-shift (see \cite{H2023}) is a $d$-tuple of commuting hyponormal operators and hence is sum-hyponormal. 
By modifying the first finitely many weights of the Drury--Arveson 2-shift, one can construct a sum-hyponormal unilateral weighted $2$-shift ${\bf W}=(W_1, W_2)$ such that neither of $W_1$ and $W_2$ is hyponormal (see Example~\ref{q-hypo}). This example shows that the compactness assumption in Question~\ref{Q1}(i) cannot be omitted. It also shows that a sum-hyponormal tuple is, in general, not normaloid. Consequently, the approach in \cite{A1963, MP1989, S1962} cannot be used to address Question~\ref{Q1}(i).

\section{Main results}

In this section, we state the main results of this paper. We begin with the following theorem, which reveals the structure of sum-normal tuples with finite-dimensional coinvariant subspaces.
\begin{theorem} \label{main-2}
Let ${\bf T}$ be a commuting $d$-tuple on a complex Hilbert space $\mathcal H$. Let $\mathcal M$ be an invariant subspace of ${\bf T}$ such that $\dim(\mathcal H \ominus \mathcal M) < \infty$.  
If ${\bf T}$ is sum-normal, then
$\mathcal M$ reduces ${\bf T}$, ${\bf T}|_{\mathcal M}$ is sum-normal, and ${\bf T}|_{\mathcal H \ominus \mathcal M}$ is normal.
\end{theorem}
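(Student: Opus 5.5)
Write ${\bf T}$ in block form with respect to $\mathcal H=\mathcal M\oplus\mathcal N$, where $\mathcal N=\mathcal H\ominus\mathcal M$ has finite dimension:
$$T_j=\begin{pmatrix} A_j & B_j\\ 0 & C_j\end{pmatrix},\qquad A_j=T_j|_{\mathcal M},\quad C_j=P_{\mathcal N}T_j|_{\mathcal N},\quad B_j=P_{\mathcal M}T_j|_{\mathcal N}.$$
Since $\mathcal M$ is invariant for every $T_j$, the compressions ${\bf A}=(A_1,\ldots,A_d)$ and ${\bf C}=(C_1,\ldots,C_d)$ are commuting $d$-tuples. The goal is to show that all $B_j$ vanish. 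Once that is done, $\mathcal M$ reduces ${\bf T}$, ${\bf T}|_{\mathcal M}={\bf A}$ and ${\bf T}|_{\mathcal N}={\bf C}$.

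\emph{Step 1: use the $(2,2)$ entry.} Compute the $(2,2)$ entry of $[T_j^*,T_j]=T_j^*T_j-T_jT_j^*$:
$$(T_j^*T_j)_{22}=B_j^*B_j+C_j^*C_j,\qquad (T_jT_j^*)_{22}=C_jC_j^*.$$
Sum-normality therefore gives
$$\sum_{j=1}^d B_j^*B_j+\sum_{j=1}^d [C_j^*,C_j]=0 \quad\text{on } \mathcal N.$$
Because $\mathcal N$ is finite-dimensional, each $[C_j^*,C_j]$ has trace zero. Taking traces yields $\sum_j\operatorname{tr}(B_j^*B_j)=0$, so $B_j=0$ for every $j$. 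This is the key step, and it is exactly where $\dim\mathcal N<\infty$ is needed.

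\emph{Step 2: consequences of $B_j=0$.} With every $B_j=0$, $\mathcal M$ reduces ${\bf T}$. Substituting back into the identity above gives $\sum_j[C_j^*,C_j]=0$, so ${\bf C}$ is sum-normal. Reading off the $(1,1)$ entry, where now $B_j=0$, shows ${\bf A}$ is sum-normal as well.

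\emph{Step 3: upgrade ${\bf C}$ to normal.} This is the main obstacle. A sum-normal commuting tuple on a finite-dimensional space must be shown to be normal. The plan is as follows.

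\begin{enumerate}
\item[(a)] By commutativity, simultaneously triangularize ${\bf C}$ in an orthonormal basis of $\mathcal N$ (Schur). Each $C_j$ is then upper triangular.
\item[(b)] Let $e_1$ be the first basis vector; it is a joint eigenvector, so the span of $e_1$ is invariant. Apply Step 1 to ${\bf C}$ itself, with this one-dimensional invariant subspace in the role of $\mathcal M$. This is legitimate because ${\bf C}$ is sum-normal and the complement has finite dimension, and it shows the span of $e_1$ reduces ${\bf C}$.
\item[(c)] The restriction of ${\bf C}$ to the complement of $e_1$ is again sum-normal by Step 2. Inducting on dimension, ${\bf C}$ splits as an orthogonal direct sum of one-dimensional joint reducing subspaces.
\item[(d)] Hence the $C_j$ are simultaneously diagonalizable in an orthonormal basis, so ${\bf C}$ is normal.
\end{enumerate}

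This Schur-plus-induction argument reuses the Step 1 computation rather than any new trace identity.
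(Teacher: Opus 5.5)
Your proof is correct, but its key step differs from the paper's. In your notation, the paper never takes traces. It picks a joint eigenvector $v\in\mathcal N$ of ${\bf C}^*$, say $C_j^*v=\lambda_j v$, and evaluates the $(2,2)$ identity $\sum_j B_j^*B_j+\sum_j[C_j^*,C_j]=0$ at $v$. This gives $\sum_j\|B_jv\|^2+\sum_j\big(\|C_jv\|^2-|\lambda_j|^2\big)=0$. Using $|\lambda_j|=|\langle C_jv,v\rangle|\Le\|C_jv\|$ and the equality case of Cauchy--Schwarz, it concludes $B_jv=0$ and $C_jv=\bar\lambda_jv$. Thus $v$ spans a one-dimensional subspace reducing ${\bf T}$, and induction on $\dim\mathcal N$ gives both the vanishing of the off-diagonal blocks and the normality of the compression in one sweep.

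The two routes differ in where finite dimensionality enters:
\begin{enumerate}
\item[(a)] The paper's eigenvector computation is pointwise and valid in any codimension; it is essentially Lemma~\ref{eigen-adjoint}(ii). Finite dimensionality is used only to produce the joint eigenvector and to terminate the induction.
\item[(b)] Your argument is global. The vanishing of $\mathrm{tr}[C_j^*,C_j]$ kills all $B_j$ at once, with no eigenvectors, no equality case and no induction. It also shows that finite codimension enters by the same mechanism as in Remark~\ref{cyclicity-trace}.
\end{enumerate}
The price of your route is that normality of ${\bf C}$ needs a separate pass, which you handle by rerunning Step 1 along a Schur flag. This can be compressed further. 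If $V_1\subset\cdots\subset V_n=\mathcal N$ is a Schur flag for ${\bf C}$, each $\mathcal M\oplus V_k$ is ${\bf T}$-invariant of finite codimension. Step 1 then shows every one of them reduces ${\bf T}$, which produces an orthonormal joint eigenbasis of $\mathcal N$ directly.
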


Applying Theorem~\ref{main-2} with $\mathcal M=\{0\}$, we conclude that every sum-normal tuple on a finite-dimensional Hilbert space is normal. Since every operator on a finite-dimensional Hilbert space is trace class and every sum-hyponormal tuple of trace-class operators is sum-normal (see Remark~\ref{cyclicity-trace}), every sum-hyponormal tuple on a finite-dimensional Hilbert space is also normal (see Proposition~\ref{main-1}).

To address Question~\ref{Q1}(i) in full generality, we next establish a decomposition theorem for sum-hyponormal tuples consisting of compact operators (see \cite[Corollary~4.2]{EP2001} for a decomposition theorem for arbitrary commuting tuples).

\begin{theorem} \label{main-4}
Let ${\bf T}$ be a sum-hyponormal $d$-tuple of compact operators on a complex Hilbert space $\mathcal H$. 
Then the closed subspace 
\begin{equation} 
\label{ortho-M}
\mathcal M = 
\bigvee_{{\lambda} \in  \sigma_p({\bf T})}\ker({\bf T}- {\lambda}) 
\end{equation}
reduces ${\bf T}$. Moreover,  
${\bf T}$ admits the decomposition
\begin{equation*}
{\bf T}={\bf N} \oplus {\bf S}~\mbox{on~}\mathcal H = \mathcal M \oplus (\mathcal H \ominus \mathcal M),
\end{equation*}
where ${\bf N}$ is a normal $d$-tuple on $\mathcal M$ and ${\bf S}$ is a completely nonnormal, quasinilpotent sum-hyponormal $d$-tuple on $\mathcal H \ominus \mathcal M$. 
Either summand may be absent.
\end{theorem}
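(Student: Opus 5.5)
The plan is to avoid Riesz-projection and trace arguments as far as possible and to base everything on one elementary observation: for a sum-hyponormal tuple, joint eigenvectors of ${\bf T}$ are joint eigenvectors of ${\bf T}^*$. For $\lambda\in\mathbb C^d$ we have $[T_j^*-\bar\lambda_j, T_j-\lambda_j]=[T_j^*,T_j]$, so ${\bf T}-\lambda$ is again sum-hyponormal with the same commutator. If $x\in\ker({\bf T}-\lambda)$, then
\begin{equation*}
0\Le \langle [{\bf T}^*,{\bf T}]x,x\rangle=\sum_{j=1}^d\big(\|(T_j-\lambda_j)x\|^2-\|(T_j-\lambda_j)^*x\|^2\big)=-\sum_{j=1}^d\|(T_j-\lambda_j)^*x\|^2 .
\end{equation*}
Hence $T_j^*x=\bar\lambda_j x$ for every $j$. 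This step uses neither compactness nor finite-dimensionality.

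\emph{Step 1: $\mathcal M$ reduces ${\bf T}$ and ${\bf N}$ is normal.} By the observation, each $\ker({\bf T}-\lambda)$ is invariant under every $T_j$ and every $T_j^*$. Therefore it reduces ${\bf T}$, and on it ${\bf T}$ acts as the scalar tuple $\lambda$. Eigenspaces for distinct $\lambda\ne\mu$ are orthogonal: if $\lambda_j\neq\mu_j$, $x\in\ker({\bf T}-\lambda)$ and $y\in\ker({\bf T}-\mu)$, then
\begin{equation*}
\lambda_j\langle x,y\rangle=\langle T_jx,y\rangle=\langle x,T_j^*y\rangle=\mu_j\langle x,y\rangle ,
\end{equation*}
so $\langle x,y\rangle=0$. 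Consequently $\mathcal M$ is the orthogonal direct sum of these reducing subspaces, hence reduces ${\bf T}$. The restriction ${\bf N}={\bf T}|_{\mathcal M}$ is diagonal, so each $N_j$ commutes with $N_j^*$ and ${\bf N}$ is normal.

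\emph{Step 2: properties of ${\bf S}={\bf T}|_{\mathcal H\ominus\mathcal M}$.} Since $\mathcal M$ reduces ${\bf T}$, ${\bf S}$ is a commuting tuple of compact operators. It is sum-hyponormal because its commutator is the compression of $[{\bf T}^*,{\bf T}]$ to a reducing subspace; alternatively, use Remark~\ref{rem-1}(b). Moreover $\sigma_p({\bf S})=\emptyset$: if ${\bf S}x=\lambda x$ with $0\ne x\perp\mathcal M$, then ${\bf T}x=\lambda x$ forces $x\in\mathcal M$, a contradiction.

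For quasinilpotency, I would show that every $S_j$ is quasinilpotent. Suppose instead that $S_j$ has a nonzero spectral point $\mu$. Because $S_j$ is compact, the Riesz projection of $S_j$ at $\mu$ has finite-dimensional nonzero range $\mathcal K$. This range is invariant under every $S_k$, since the $S_k$ commute with $S_j$ and hence with its Riesz projection. A commuting tuple on a nonzero finite-dimensional space has a joint eigenvector, which would give $\sigma_p({\bf S})\neq\emptyset$, a contradiction. Thus $\sigma(S_j)=\{0\}$ for all $j$. By the projection property of the Taylor spectrum, $\sigma({\bf S})\subseteq\prod_j\sigma(S_j)=\{0\}$. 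The Taylor spectrum is nonempty when $\mathcal H\ominus\mathcal M\neq\{0\}$, so $\sigma({\bf S})=\{0\}$.

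\emph{Step 3: complete nonnormality, and the obstacle.} If a nonzero subspace $\mathcal L$ reduces ${\bf S}$ to a normal tuple, then ${\bf S}|_{\mathcal L}$ is a commuting normal tuple of compact operators. Simultaneous diagonalization, via the spectral theorem for commuting compact normal operators, yields a joint eigenvector. This again contradicts $\sigma_p({\bf S})=\emptyset$. Finally, either summand may be absent: $\mathcal M=\{0\}$ when there are no eigenvalues, and $\mathcal M=\mathcal H$ when ${\bf T}$ is normal and diagonalizable.

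The only step with genuine multivariable content is the passage from ``no joint eigenvalues'' to $\sigma({\bf S})=\{0\}$. The Riesz-projection argument should handle it, but it relies on the projection property and the nonemptiness of the Taylor spectrum, both of which I will cite from \cite{T1970, Cu1981}.
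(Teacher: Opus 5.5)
Your proof is correct. Step 1 coincides with the paper's: the paper obtains $\ker({\bf T}-\lambda)\subseteq\ker({\bf T}^*-\bar\lambda)$ as Lemma~\ref{eigen-adjoint}(i) and then runs the same orthogonality computation. The routes diverge after that.

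For quasinilpotency, the paper cites Bhattacharyya's theorem \cite[Theorem~3.1]{B1996}: every nonzero point of the Taylor spectrum of a commuting tuple of compact operators is a joint eigenvalue. It combines this with $\sigma({\bf T})=\sigma({\bf N})\cup\sigma({\bf S})$. You instead prove the needed special case by one-variable Riesz theory. The Riesz projection of $S_j$ at a nonzero spectral point has finite rank and is invariant under the commutant, which yields a joint eigenvector of ${\bf S}$. You then pass to the tuple using the projection property and nonemptiness of the Taylor spectrum.

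For complete nonnormality, the paper uses that normal tuples are normaloid: a normal, quasinilpotent reduced part must vanish and hence lie in $\ker{\bf S}$. You use simultaneous diagonalization of commuting compact normal operators instead.

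What each route buys: yours is more self-contained, needing only classical Riesz--Schauder theory and two basic properties of the Taylor spectrum rather than a multivariable structure theorem. It also applies the eigenvalue argument directly to ${\bf S}$, which is exactly what is required. The paper's wording applies the cited theorem to ${\bf T}$, but to exclude nonzero points of $\sigma({\bf S})$ one really needs it for the compact tuple ${\bf S}$, whose point spectrum is empty. The paper's route is shorter once the citation is granted.

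Your Step~3 could be shortened. Once each $S_j$ is quasinilpotent, a normal restriction $S_j|_{\mathcal L}$ to a reducing subspace has spectral radius $0$, hence norm $0$. So $\mathcal L\subseteq\ker{\bf S}$, contradicting $\sigma_p({\bf S})=\emptyset$.
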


Theorem~\ref{main-4} shows that Question~\ref{Q1}(i) has an affirmative answer provided every quasinilpotent sum-normal $d$-tuple is the zero $d$-tuple ${\bf 0}$. 
This naturally leads to the question of whether a sum-normal $d$-tuple consisting entirely of nilpotent operators must be zero. The following result answers this question in the affirmative.

\begin{theorem} \label{nilpotent-thm}
Let ${\bf T}$ be a sum-hyponormal $d$-tuple on $\mathcal H$ consisting of nilpotent operators.  
Then, ${\bf T}={\bf 0}$.
\end{theorem}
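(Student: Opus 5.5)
The proof uses only the positivity of $[{\bf T}^*,{\bf T}]$, and only on vectors of the joint kernel $\ker({\bf T})$. The plan is to show that $\ker({\bf T})$ is orthogonal to the range of every $T_j$. Then a top-degree nonvanishing monomial in $T_1,\ldots,T_d$ yields a contradiction unless ${\bf T}={\bf 0}$.

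\emph{Step 1 (the joint kernel is orthogonal to all ranges).} Sum-hyponormality means that for every $x\in\mathcal H$,
\begin{equation*}
\sum_{j=1}^d \|T_j^*x\|^2 \Le \sum_{j=1}^d \|T_jx\|^2 .
\end{equation*}
For $x\in\ker({\bf T})$ the right-hand side vanishes, so $T_j^*x=0$ for every $j$. Hence $\ker({\bf T})\subseteq \ker(T_j^*)=\mathcal H\ominus \overline{\mathrm{ran}\,T_j}$. In particular,
\begin{equation*}
\ker({\bf T})\cap \mathrm{ran}\,T_j=\{0\}, \quad j=1,\ldots,d.
\end{equation*}

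\emph{Step 2 (a finite top degree).} Let $k_j$ be the nilpotency index of $T_j$. Since the $T_j$ commute, every monomial ${\bf T}^{\alpha}$ with $|\alpha|=\alpha_1+\cdots+\alpha_d \Ge \sum_j (k_j-1)+1$ vanishes, because some $\alpha_j\Ge k_j$. Therefore there is a smallest positive integer $m$ such that ${\bf T}^{\alpha}=0$ for all $|\alpha|= m$. Since every monomial of degree greater than $m$ factors through one of degree $m$, all monomials of degree at least $m$ then vanish. If $m=1$, then ${\bf T}={\bf 0}$ and we are done.

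\emph{Step 3 (contradiction when $m\Ge 2$).} Suppose $m\Ge2$. By minimality of $m$, choose $\alpha$ with $|\alpha|=m-1\Ge1$ and ${\bf T}^{\alpha}\neq0$. For every $j$, the operator $T_j{\bf T}^{\alpha}$ is a monomial of degree $m$ and so vanishes. Hence $\mathrm{ran}\,{\bf T}^{\alpha}\subseteq\ker({\bf T})$. Pick $i$ with $\alpha_i\Ge1$. By commutativity ${\bf T}^{\alpha}=T_i{\bf T}^{\alpha-\varepsilon_i}$, so $\mathrm{ran}\,{\bf T}^{\alpha}\subseteq \mathrm{ran}\,T_i$. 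Step 1 then gives $\mathrm{ran}\,{\bf T}^{\alpha}\subseteq \ker({\bf T})\cap\mathrm{ran}\,T_i=\{0\}$, contradicting ${\bf T}^{\alpha}\neq0$. Thus $m=1$ and ${\bf T}={\bf 0}$.

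The only nontrivial point is Step 1: the trace-type positivity controls individual operators only on the joint kernel, since positivity of the sum alone does not make any single $T_j$ hyponormal. The argument is therefore arranged so that the top-degree monomial maps into $\ker({\bf T})$, where this control is available. Step 2 is routine, though commutativity is genuinely needed there to ensure that all monomials of sufficiently high degree vanish.
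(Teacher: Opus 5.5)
Your proof is correct, and its analytic core is the same as the paper's. The paper isolates a cancellation lemma (Lemma~\ref{prod-lem}): if $\alpha\neq 0$ and ${\bf T}^{\alpha+\varepsilon_l}=0$ for all $l$, then ${\bf T}^{\alpha}=0$. It proves this by compressing $[{\bf T}^*,{\bf T}]$ by ${\bf T}^{\alpha}$ to get $T_l^*{\bf T}^{\alpha}=0$ for every $l$, and then cancelling via $\ker(T_{l_0}^*T_{l_0})=\ker(T_{l_0})$. Your Step~1 together with the range argument in Step~3 is exactly this lemma, rephrased as the inclusion $\ker({\bf T})\subseteq\ker({\bf T}^*)$ combined with $\ker(T_i^*)\perp\mathrm{ran}\,T_i$. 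The paper also records that inclusion in the proof of Lemma~\ref{eigen-adjoint}.

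Where you differ is the descent. The paper first discards the $T_j$ with $k_j=1$ and starts from $\prod_j T_j^{k_j-1}$. It then runs a nested induction that lowers one exponent at a time through $d-1$ layers, ending with $T_l^{k_l-1}=0$, which contradicts the nilpotency index. At each stage one must check that all $d$ neighbours $\alpha+\varepsilon_l$ are already known to vanish, and the later layers are only sketched (``continuing this''). You instead note that vanishing is inherited by higher-degree monomials, and you take the least total degree $m$ at which every monomial vanishes. A single cancellation step applied to a nonzero monomial of degree $m-1$ then gives the contradiction. Your version is shorter and fully explicit, and it makes clear that only the joint-kernel inclusion is used. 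The paper's exponent-by-exponent bookkeeping yields nothing beyond this.
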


The previous result raises the natural question of whether the tuple of nilpotent operators can be replaced by a quasinilpotent tuple. Before addressing this question, we establish a multivariable analogue of Putnam's inequality (see \cite[Theorem~1]{P1970}; for other multivariable analogues of Putnam's inequality, see \cite[Theorem~1.1]{CCHZ2000}, \cite[Theorem~3.2]{M2002}, and \cite[Theorem~5]{X1983}). We begin by introducing the notation. Let $s \in \mathbb C[x_1, \ldots, x_d]$ be the polynomial defined by
\begin{equation}
\label{s-polynomial}
{s}(x_1, \ldots, x_d):=\sum_{j=1}^{\lfloor d/2 \rfloor}(x_{2j-1}+ix_{2j})+  (d \bmod 2)x_{d}.
\end{equation}  
Note that $s$ is a surjective complex-linear polynomial. In particular, if $E \subseteq \mathbb C$ satisfies $\mathrm{m}_2(E)=0$, then $\mathrm{m}_{2d}(s^{-1}(E))=0$, where $\mathrm{m}_{2d}$ denotes the Lebesgue measure on $\mathbb C^d$.
\begin{theorem} \label{new-thm-Putnam}
Let ${\bf T}=(T_1, \ldots, T_d)$ be a sum-hyponormal $d$-tuple on a complex Hilbert space $\mathcal H$ satisfying
\begin{equation}
\label{Im-commute-new}
[\Im(T_j), \Im(T_k)]=0, \quad 1 \Le j, k \Le d.
\end{equation} 
Then, for every $\eta \in \mathfrak S_d$, 
\begin{equation}
\label{Putnam-d-var}
\|[{\bf T}^*, {\bf T}]\| \Le \frac{1}{\pi}\, \mathrm{m}_2\big(\sigma\big(s(\eta \cdot {\bf T})\big)\big).
\end{equation}
In particular, if there exists a subset $E \subseteq \mathbb C$ of planar Lebesgue measure zero such that $\sigma({\bf T}) \subseteq s^{-1}(E)$, then ${\bf T}$ is normal.
\end{theorem}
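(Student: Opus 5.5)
The plan is to reduce the statement to Putnam's classical inequality \cite[Theorem~1]{P1970} for a single operator. By Remark~\ref{rem-1}(c), sum-hyponormality and $[{\bf T}^*,{\bf T}]$ are invariant under permutations. Hypothesis \eqref{Im-commute-new} is also permutation invariant. So it suffices to treat $\eta=\mathrm{id}$ and to study the single operator
$$S:=s({\bf T})=\sum_{k=1}^d c_k T_k,\qquad c_k\in\{1,i\},$$
where $c_{2j-1}=1$ and $c_{2j}=i$, with $c_d=1$ when $d$ is odd. Since the $T_k$ commute, the spectral mapping theorem for the Taylor spectrum gives $\sigma(S)=s(\sigma({\bf T}))$. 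Hence the final assertion follows from \eqref{Putnam-d-var}: if $\sigma({\bf T})\subseteq s^{-1}(E)$ with $\mathrm{m}_2(E)=0$, then $\mathrm{m}_2(\sigma(S))=0$, so $[{\bf T}^*,{\bf T}]=0$. This means ${\bf T}$ is sum-normal. We then need to upgrade sum-normal to normal. To do so, I would rerun the argument with a permutation $\eta$ together with suitable unimodular rescalings of the coordinates, which again preserve $[{\bf T}^*,{\bf T}]$ and \eqref{Im-commute-new}, so that each $[T_j^*,T_j]$ is isolated. Alternatively, I would combine sum-normality with the vanishing of the cross commutators obtained below.

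The core step is the expansion
$$[S^*,S]=\sum_{j,k}\bar c_j c_k [T_j^*,T_k]=[{\bf T}^*,{\bf T}]+\sum_{j<k}2\,\Re\big(\bar c_j c_k\,X_{jk}\big),\qquad X_{jk}:=[T_j^*,T_k].$$
Here I use $X_{jk}^*=X_{kj}$. Write $T_k=A_k+iB_k$ with $A_k=\Re(T_k)$ and $B_k=\Im(T_k)$. By \eqref{d-comm-formula-im}, hypothesis \eqref{Im-commute-new} gives $\Im(X_{jk})=0$. So the cross terms with $\bar c_jc_k=\pm i$, namely those between a coefficient $1$ and a coefficient $i$, vanish. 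In addition, commutativity $[T_j,T_k]=0$ together with $[B_j,B_k]=0$ forces $[A_j,A_k]=0$. It then yields $X_{jk}=2i[A_j,B_k]$, and this is self-adjoint and symmetric in $j,k$.

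The main obstacle is the remaining cross terms. These come from pairs with equal coefficients, i.e. $\bar c_jc_k=1$, and contribute $2X_{jk}$. My first attempt would be to show that these vanish as well, or at least that they are controlled so that $[S^*,S]\Ge 0$ and $\|[S^*,S]\|\Ge\|[{\bf T}^*,{\bf T}]\|$. For this I would exploit the self-adjointness and symmetry of $X_{jk}$ together with the Jacobi-type identity \eqref{d-comm-formula}. If that fails, I would bypass hyponormality of $S$ and instead reprove Putnam's inequality directly for $S$. That route follows Putnam's original argument through the real and imaginary parts: $\Im(S)$ is built from the commuting family $\{B_k\}\cup\{A_k\}$ appearing in $s$, and the estimate $\|[{\bf T}^*,{\bf T}]\|\Le\frac1\pi\mathrm{m}_2(\sigma(S))$ would be derived via the Kato--Putnam spectral-measure argument applied to the positive operator $[{\bf T}^*,{\bf T}]$ in place of $[S^*,S]$.

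Once $[S^*,S]\Ge[{\bf T}^*,{\bf T}]\Ge0$ is established, with the cross terms absent, $S$ is hyponormal. Putnam's inequality then gives $\|[{\bf T}^*,{\bf T}]\|\Le\|[S^*,S]\|\Le\frac1\pi\mathrm{m}_2(\sigma(S))$, which is \eqref{Putnam-d-var}.
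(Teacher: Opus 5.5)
Your preliminary computations are correct: the reduction to $\eta=\mathrm{id}$, the identity $\sigma(S)=s(\sigma({\bf T}))$, the vanishing of the $1$/$i$ cross terms, and the fact that $[T_j^*,T_k]=2i[\Re(T_j),\Im(T_k)]$ is self-adjoint and symmetric in $j,k$. The gap is exactly the step you flag as the main obstacle. It cannot be closed, because for $d\Ge 3$ the statement itself fails.

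\textbf{Counterexample for $d\Ge 3$.} Let $U$ be the unilateral shift and ${\bf T}=(U,0,-U,0,\ldots,0)$.
\begin{itemize}
\item This tuple is commuting, and its imaginary parts $\Im(U),0,-\Im(U),0,\ldots$ commute.
\item $[{\bf T}^*,{\bf T}]=2(I-UU^*)\Ge 0$ has norm $2$.
\item $T_1$ and $T_3$ both carry the coefficient $1$ in $s$, so $s({\bf T})=U-U=0$.
\end{itemize}
In your expansion, $[S^*,S]=[{\bf T}^*,{\bf T}]+2[T_1^*,T_3]=0$: the equal-coefficient cross term cancels the diagonal entirely. Hence \eqref{Putnam-d-var} with $\eta=\mathrm{id}$ reads $2\Le 0$. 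Since $\sigma({\bf T})\subseteq s^{-1}(\{0\})$ by spectral mapping, the ``in particular'' clause would also force $U$ to be normal. This is not a measure-zero artefact: $(U,U,-U,0,\ldots,0)$ gives $s({\bf T})=iU$ and the inequality would read $3\Le 1$. So neither of your routes can succeed, whether you try to prove the cross terms vanish or run a Kato--Putnam argument with $[{\bf T}^*,{\bf T}]$ in place of $[S^*,S]$.

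\textbf{What your argument does prove, and how it relates to the paper.} Your expansion proves \eqref{Putnam-d-var} for $d\Le 2$, where no two variables share a coefficient and so $[S^*,S]=[{\bf T}^*,{\bf T}]$. More generally, it proves \eqref{Putnam-d-var} whenever $[T_j^*,T_k]=0$ for all pairs receiving the same coefficient, e.g.\ for doubly commuting tuples. The paper's base case $d=2$ is precisely your argument.

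For larger $d$, the paper pads to $2^k$ variables and inducts via $N_j=T_{2j-1}+iT_{2j}$, asserting that ${\bf N}$ again satisfies \eqref{im-commute}. But \eqref{im-commute} only yields
\[
[N_j^*,N_l]-[N_l^*,N_j]=2i\big([T_{2j-1}^*,T_{2l}]-[T_{2j}^*,T_{2l-1}]\big).
\]
This is nonzero for ${\bf T}=(U,0,0,U)$, where ${\bf N}=(U,iU)$. That is exactly the tuple which the sign change and transposition in that proof produce from $(U,0,-U,0)$. So the paper's induction hides the very obstruction you identified rather than resolving it.

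\textbf{Upgrading sum-normal to normal.} Even for $d=2$, your proposed upgrade does not work. Permutations and unimodular rescalings leave every $[T_j^*,T_j]$, hence $[{\bf T}^*,{\bf T}]$, unchanged. Rerunning the argument therefore only reproduces $[{\bf T}^*,{\bf T}]=0$. Moreover, rescaling a single $T_j$ by a non-real $\omega$ generally destroys \eqref{Im-commute-new}, since $\Im(\omega T)=\Re(\omega)\Im(T)+\Im(\omega)\Re(T)$.

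The missing idea is Fuglede--Putnam. If $[{\bf T}^*,{\bf T}]=0$, then $N=T_1+iT_2$ is normal and commutes with $T_1,T_2$, so $N^*$ does too. This gives $[T_1^*,T_j]=i[T_2^*,T_j]$. The left side is self-adjoint and the right side skew-adjoint, so both vanish and ${\bf T}$ is normal.
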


In the one-variable case, the additional assumption \eqref{Im-commute-new} in Theorem~\ref{new-thm-Putnam} is vacuous, so the theorem is an exact generalization of Putnam's inequality.
Moreover, it shows that there are no nonzero quasinilpotent sum-hyponormal $d$-tuples satisfying \eqref{Im-commute-new} (corresponding to the case $E=\{0\}$). A key ingredient in the proof is the fact that every sum-normal $d$-tuple satisfying \eqref{Im-commute-new} is necessarily normal (see Lemma~\ref{coro-Putnam} and Proposition~\ref{main-3-coro}). It is worth noting that the additional assumption \eqref{Im-commute-new} is satisfied by every doubly commuting $d$-tuple (see \eqref{d-comm-formula-im}). Furthermore, there exist commuting pairs of isometries satisfying \eqref{Im-commute-new} that are not doubly commuting (see, for example, \cite[Theorem~1]{II2006} and \cite[Theorem~2]{IO1994}). Finally, \eqref{Putnam-d-var} is 
nontrivially optimal (see \eqref{optimal}).

We now state the final main result of this paper answering Question~\ref{Q1}(iii).
\begin{theorem} \label{main-3}
Let ${\bf T}=(T_1, \ldots, T_d)$ be a sum-hyponormal $d$-tuple on a complex Hilbert space $\mathcal H$ satisfying
\begin{equation}
\label{double-commutator}
\big[T_j, [T^*_k, T_k]\big] =0,  
\quad 1 \Le j \neq k \Le d.
\end{equation}
Then $T_1, \ldots, T_d$ are hyponormal operators. In particular, every doubly commuting sum-hyponormal $d$-tuple on a complex Hilbert space is hyponormal.
\end{theorem}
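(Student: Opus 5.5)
The plan is to prove $[T^*_1,T_1]\Ge 0$ and then invoke the permutation invariance of Remark~\ref{rem-1}(c) to get the same conclusion for every $j$. Set $D_k=[T^*_k,T_k]$, which is self-adjoint, and $D=\sum_k D_k=[{\bf T}^*,{\bf T}]\Ge 0$. Hypothesis \eqref{double-commutator} says that $D_1$ commutes with $T_j$ for $j\neq 1$. Taking adjoints, $D_1$ also commutes with $T_j^*$. Hence every spectral projection of $D_1$ commutes with $T_2,\ldots,T_d$ and their adjoints.

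Fix $\varepsilon>0$ and let $Q$ be the spectral projection of $D_1$ for $(-\infty,-\varepsilon]$; the goal is to show $Q=0$. Since $Q$ reduces $T_2,\ldots,T_d$, put $S_k=T_k|_{Q\mathcal H}$ for $k\Ge 2$. Then $(S_2,\ldots,S_d)$ is a commuting $(d-1)$-tuple on $Q\mathcal H$, and $QD_kQ=[S_k^*,S_k]$ for $k\Ge 2$. Also $QD_1Q\Le -\varepsilon Q$. Compressing $D\Ge 0$ to $Q\mathcal H$ therefore gives
\[
\sum_{k=2}^d [S^*_k,S_k]\Ge -QD_1Q \Ge \varepsilon I_{Q\mathcal H}.
\]
So it suffices to show the following: no commuting tuple ${\bf S}$ on a nonzero Hilbert space satisfies $[{\bf S}^*,{\bf S}]\Ge \varepsilon I$. (For $d=1$ the sum over $k\Ge 2$ is empty, so $0\Ge \varepsilon Q$ forces $Q=0$ at once.)

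To prove this, suppose $Q\mathcal H\neq\{0\}$. Then the Taylor spectrum $\sigma({\bf S})$ is nonempty and compact, and its topological boundary lies in the joint approximate point spectrum $\sigma_{ap}({\bf S})$. Pick $\lambda\in\partial\sigma({\bf S})$ and unit vectors $x_n$ with $\|(S_k-\lambda_k)x_n\|\to 0$ for all $k$. The commutator is translation invariant:
\[
[({\bf S}-\lambda)^*,{\bf S}-\lambda]=[{\bf S}^*,{\bf S}].
\]
Hence
\[
\varepsilon\Le \langle [{\bf S}^*,{\bf S}]x_n,x_n\rangle=\sum_k\|(S_k-\lambda_k)x_n\|^2-\sum_k\|(S_k-\lambda_k)^*x_n\|^2\Le \sum_k\|(S_k-\lambda_k)x_n\|^2\to 0,
\]
which is a contradiction. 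Thus $Q=0$ for every $\varepsilon>0$, so $D_1\Ge 0$, that is, $T_1$ is hyponormal.

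The main point requiring care is this last step: that the boundary of the Taylor spectrum consists of joint approximate eigenvalues. This is a standard fact (\cite{Cu1981}), and I expect it to be the only nontrivial input.

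For the final assertion, assume ${\bf T}$ is doubly commuting. Identity \eqref{d-comm-formula} gives
\[
[T_j,[T_k^*,T_k]]=[[T_j,T_k^*],T_k]=0 \quad\text{for } j\neq k,
\]
so \eqref{double-commutator} holds and each $T_j$ is hyponormal. Moreover the commutator matrix $[\![{\bf T}^*,{\bf T}]\!]$ is then diagonal with nonnegative diagonal entries, so ${\bf T}$ is hyponormal.
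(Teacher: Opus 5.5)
Your proof is correct and follows essentially the paper's route. The paper packages your spectral-projection step as Lemma~\ref{gen-fact}, applied to $C=[T_j^*,T_j]$ and the tuple with $T_j$ deleted, and your final step as Lemma~\ref{comnotbb}, where the joint approximate eigenvalue comes from the nonemptiness of $\sigma_{ap}$ (Bunce) rather than from $\partial\sigma({\bf S})\subseteq\sigma_{ap}({\bf S})$; your use of the projections $E((-\infty,-\varepsilon])$ for every $\varepsilon>0$ is slightly cleaner than the paper's $E([-\|C\|,\lambda_0])$ with $\lambda_0\in\sigma(C)$, which need not be a nonzero projection.
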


The proofs of Theorems~\ref{main-2}--\ref{main-3} are presented in Section~\ref{Sect2}. In Section~\ref{Sect3}, we discuss several consequences of these results; see Propositions~\ref{main-1}, and \ref{mixed-cohypo-coro}--\ref{sum-normal-to-normal}, as well as Corollary~\ref{coro-single}. 

\section{Proofs of the main results \label{Sect2}}

\begin{proof}[Proof of Theorem~\ref{main-2}]
Let $\mathcal H_1$ be an invariant subspace of a commuting $d$-tuple ${\bf T}$ on a Hilbert space $\mathcal H$. With respect to the orthogonal decomposition $\mathcal H= \mathcal H_1 \oplus \mathcal H_2,$ the operators $T_j$ admit the block matrix representations: 
\begin{equation} \label{matrix-rep}
T_j=\begin{pmatrix}
A_j & X_j \\
0 & B_j
\end{pmatrix}, \quad j=1,\ldots,d.
\end{equation}
Since $[T_j, T_k]=0,$ for $1 \Le j, k \Le d,$ 
\begin{equation} \label{T-commute}
[A_j, A_k] =  0,~
[B_j, B_k] = 0, \quad
 j, k = 1, \ldots, d.
\end{equation}
It is straightforward to see using \eqref{matrix-rep} that 
\begin{align*}
T_j^* T_j = 
\begin{pmatrix}
A_j^* A_j  & A_j^* X_j  \\[4pt]
X_j^* A_j & X_j^* X_j+ B_j^* B_j
\end{pmatrix},
\quad
T_jT_j^*=\begin{pmatrix}
A_j A_j^* +  X_j  X_j^* &  X_j B_j^* \\[4pt]
B_j X_j^* & B_j B_j^*
\end{pmatrix} \\
\Longrightarrow [T_j^*, T_j] = 
\begin{pmatrix}
[A_j^*, A_j] - X_jX_j^*  & A_j^*X_j-X_jB_j^*  \\[4pt]
X_j^*A_j-B_jX_j^* & X_j^* X_j+ [B_j^*, B_j]
\end{pmatrix}, \quad j=1,\ldots,d.
\end{align*}
By the sum-normality of ${\bf T},$ we have 
\begin{align}\label{0-identity-0}
[{\bf A}^*,{\bf A}]= \sum\limits_{j=1}^d X_jX_j^*,\\
\label{1-identity-1}
[{\bf B}^*,{\bf B}]= -\sum\limits_{j=1}^d X_j^*X_j,
\end{align}
where ${\bf A}=(A_1, \ldots, A_d)$ and ${\bf B}=(B_1, \ldots, B_d)$. 
Assume further that $\dim(\mathcal H_2) < \infty.$ 
Since ${\bf B}$ is a commuting  $d$-tuple on $\mathcal H_2$ (see \eqref{T-commute}), it follows that ${{\bf B}^*}$ admits an eigenvector $v$ of unit norm corresponding to an eigenvalue $(\lambda_1,\ldots, \lambda_d)\in \mathbb C^d$:
\begin{equation} \label{joint-eigen new}
B_j^*v = \lambda_j v, \quad j=1,2,\ldots, d.
\end{equation}
This, together with \eqref{1-identity-1}, implies that
\begin{equation} \label{identity 3 new}
\sum_{j=1}^d(\|B_j v\|^2-|\lambda_j|^2) 
+ \sum_{j=1}^d \|X_j v\|^2 = 0. 
\end{equation}
On the other hand,  by \eqref{joint-eigen new}, 
\begin{equation} \label{eigen-eqn new}
\langle B_jv, v\rangle = \bar{\lambda}_j, \quad j=1,\ldots,d.
\end{equation}  
Hence, by the Cauchy--Bunyakovsky--Schwarz inequality (see \cite[Theorem~I.1.4]{Co1990}), 
$|\lambda| \Le \|B_jv\|$ for $j=1,\ldots,d.$
Together with \eqref{identity 3 new}, this implies that
\begin{align} \label{XAv-XBv-0}
X_j v = 0, \quad 1 \Le j \Le d, \\ \label{XAv-XBv-1}
\|B_j v\| = |\lambda_j|, \quad 1 \Le j \Le d.
\end{align} 
By \eqref{eigen-eqn new} and \eqref{XAv-XBv-1}, $$|\langle B_jv, v\rangle|=\|B_j v\|\|v\|, \quad 1\Le j \Le d.$$  
By the equality case of the Cauchy--Bunyakovsky--Schwarz inequality and using \eqref{eigen-eqn new}, we have $B_jv=\bar{\lambda}_jv $ for each $j=1,\ldots,d.$ Hence, the one-dimensional subspace spanned by $\{v\}$ reduces ${\bf B}$. 
As $\dim(\mathcal H_2) < \infty$, an induction argument shows that  the $d$-tuple ${\bf B}$ is normal. Furthermore, 
by \eqref{XAv-XBv-0}, 
\begin{equation} \label{XA-XB=0-new}
X_j = 0, \quad 1 \Le j \Le d.
\end{equation}
Consequently, $\mathcal H_1$ is a reducing subspace of ${\bf T}.$ Moreover, by \eqref{0-identity-0} and \eqref{XA-XB=0-new}, the $d$-tuple ${\bf A}$ is sum-normal.
\end{proof}
\begin{remark} \label{restriction-reducing}
Let ${\bf T}$ be a sum-hyponormal $d$-tuple and let $\mathcal M$ be an invariant subspace of ${\bf T}$. An examination of the proof of Theorem~\ref{main-2} shows that if ${\bf A}:={\bf T}|_\mathcal M$ is sum-normal, then $\mathcal M$ reduces ${\bf T}$. 
Indeed, decomposing each $T_j$ as in \eqref{matrix-rep} and arguing as in the proof of Theorem~\ref{main-2}, we obtain $\sum_{j=1}^d  X_jX^*_j \Le [{\bf A^*}, {\bf A}]=0$ (cf. \eqref{0-identity-0}). 
Hence, $X_j =0$ for every $1 \Le j \Le d$. Therefore, $\mathcal M$ reduces ${\bf T}$. \hfill $\diamondsuit$
\end{remark}

We now provide a proof of Theorem~\ref{main-4}. This is based on the following lemma.
\begin{lemma} 
\label{eigen-adjoint}
Let ${\bf T}$ be a commuting $d$-tuple on $\mathcal H$. Then, for every $\lambda \in \mathbb C^d$, the following statements hold$:$
\begin{enumerate}
\item[(i)] If ${\bf T}$ is sum-hyponormal, then 
$\ker({\bf T} - {\lambda}) \subseteq\ker({\bf T}^*-\bar{\lambda})$.
\item[(ii)] If ${\bf T}$ is sum-normal, then $\ker({\bf T}-\lambda)=\ker({\bf T}^*-\bar{\lambda})$.
\end{enumerate}
\end{lemma}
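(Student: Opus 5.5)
First translate so that we may assume $\lambda=0$. The shifted tuple ${\bf T}-\lambda$ is still commuting, and $[(T_j-\lambda_j)^*, T_j-\lambda_j]=[T_j^*,T_j]$ for each $j$. Hence $[({\bf T}-\lambda)^*,{\bf T}-\lambda]=[{\bf T}^*,{\bf T}]$, and sum-hyponormality (resp. sum-normality) is preserved under translation. It therefore suffices to show that $\ker({\bf T})\subseteq\ker({\bf T}^*)$ when ${\bf T}$ is sum-hyponormal.

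For (i), take $x\in\ker({\bf T})$, so that $T_jx=0$ for every $j$. Evaluate the positivity of the commutator at $x$:
\begin{equation*}
0\Le \langle [{\bf T}^*,{\bf T}]x,x\rangle=\sum_{j=1}^d\big(\|T_jx\|^2-\|T_j^*x\|^2\big)=-\sum_{j=1}^d\|T_j^*x\|^2 .
\end{equation*}
This forces $T_j^*x=0$ for all $j$, that is, $x\in\ker({\bf T}^*)$. Undoing the translation gives $\ker({\bf T}-\lambda)\subseteq\ker({\bf T}^*-\bar\lambda)$.

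For (ii), Remark~\ref{rem-1}(a) says that if ${\bf T}$ is sum-normal, then both ${\bf T}$ and ${\bf T}^*$ are sum-hyponormal, and ${\bf T}^*$ is again a commuting tuple. Applying (i) to ${\bf T}$ at $\lambda$ gives one inclusion. Applying (i) to ${\bf T}^*$ at $\bar\lambda$ gives $\ker({\bf T}^*-\bar\lambda)\subseteq\ker({\bf T}-\lambda)$, and the two inclusions yield equality.

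There is no real obstacle in this argument. The only point requiring care is checking that the commutator is translation invariant; this holds because the cross terms $-\bar\lambda_jT_j-\lambda_jT_j^*$ cancel between $(T_j-\lambda_j)^*(T_j-\lambda_j)$ and $(T_j-\lambda_j)(T_j-\lambda_j)^*$.
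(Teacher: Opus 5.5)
Your proof is correct and follows the paper's argument. Translation invariance of the commutator reduces (i) to $\ker({\bf T})\subseteq\ker({\bf T}^*)$, and (ii) follows from (i) applied to ${\bf T}$ and ${\bf T}^*$ via Remark~\ref{rem-1}(a); your evaluation of $\langle[{\bf T}^*,{\bf T}]x,x\rangle$ supplies the kernel inclusion, which the paper only asserts.
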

\begin{proof}
Note that, for any commuting $d$-tuple ${\bf T}$ on $\mathcal H$,
\begin{equation}\label{T-xI} 
[{\bf T}^*- \bar{\lambda}, {\bf T}-\lambda] =  [{\bf T}^*, {\bf T}], \quad \lambda \in \mathbb C^d. 
\end{equation} 
Moreover, since $\ker {\bf T} \subseteq \ker {\bf T}^*$ holds for any sum-hyponormal $d$-tuple ${\bf S}$, 
the formula \eqref{T-xI} yields (i). Assertion (ii) then follows from (i) and Remark~\ref{rem-1}(a). 
\end{proof}
\begin{remark} 
If ${\bf T}$ is a sum-hyponormal $d$-tuple, then by Lemma~\ref{eigen-adjoint}(i), $$\sigma_p({\bf T}) \subseteq \big\{\lambda \in \mathbb C^d : \bar{\lambda} \in \sigma_p({\bf T}^*)\big\}.$$ 
Thus, if ${\bf T}$ is completely nonnormal, then $\sigma_p({\bf T})=\emptyset$ (cf. \cite[Proposition~III.1.1]{MP1989}). Lemma~\ref{eigen-adjoint}(ii) implies that any sum-normal $d$-tuple has quasitriangular property in the sense of \cite[Definition~3]{HK2010}. \hfill $\diamondsuit$
\end{remark}

\begin{proof}[Proof of Theorem~\ref{main-4}] Let $\lambda, \mu \in \sigma_p({\bf T})$ with $\lambda \neq \mu$. Thus $\lambda_j \neq \mu_j$ for some $1 \Le j \Le d$. Let $x, y \in \mathcal H$ be nonzero vectors such that $T_jx=\lambda_j x$ and $T_jy=\mu_j y$. By Lemma~\ref{eigen-adjoint}(i),  
\begin{equation*}
\lambda_j \langle x, y \rangle= \langle T_j x, y \rangle = \langle x, T^*_j y\rangle = \mu_j \langle x, y \rangle,  
\end{equation*}
which implies that $\langle x, y \rangle=0$. 
Thus, $\mathcal M$ is given by
\begin{equation*}
\mathcal M = \bigoplus_{{\lambda} \in  \sigma_p({\bf T})} \ker({\bf T}- {\lambda}),
\end{equation*}
and $\mathcal M$ reduces ${\bf T}$ to a normal $d$-tuple. If $\sigma_p({\bf T})=\emptyset$, then $\mathcal M = \{0\}$, and the first summand is absent. If $\mathcal H$ is finite dimensional, then $\mathcal M=\mathcal H$, and the second summand is absent. Therefore, we may assume that $\mathcal H$ is infinite dimensional. 
By \cite[Theorem 3.1]{B1996}, every nonzero point in the Taylor spectrum $\sigma({\bf T})$ of ${\bf T}$ is an eigenvalue with corresponding eigenvector belonging to $\mathcal M$. 
Also, since
\begin{equation*}
\sigma({\bf T})=\sigma({\bf T}|_{\mathcal M}) \cup \sigma({\bf T}|_{\mathcal H \ominus \mathcal M})
\end{equation*} 
is a union of two nonempty sets (see \cite[Corollary, p.~185]{T1970}), it follows that $\sigma({\bf T}|_{\mathcal H \ominus \mathcal M})=\{{0}\}$, that is, ${\bf T}|_{\mathcal H \ominus \mathcal M}$ is quasinilpotent. 
Since any normal $d$-tuple ${\bf N}$ is normaloid 
(see \cite[Lemma~3.10]{CS2017}; see also Proposition~A), 
this implies that ${\bf S}:={\bf T}|_{\mathcal H \ominus \mathcal M}$ is completely nonnormal. 
\end{proof}
\begin{remark}
Let ${\bf T}$ be a commuting $d$-tuple of compact operators on a separable Hilbert space $\mathcal H$. 
By \cite[Theorems~7.1.9 and 7.2.1]{RR2000}, 
there is a chain 
$\mathcal M_0 \subset \mathcal M_1 \subset \mathcal M_2 \subset \cdots$
of closed subspaces of $\mathcal H$ such that 
\begin{itemize}
		\item[(i)] for every integer $j \Ge 0,$ $\mathcal M_j$ is an invariant subspace of ${\bf T}$,   
		\item[(ii)] for every integer $j \Ge 0,$ $\dim(\mathcal M_{j+1} \ominus \mathcal M_j) < \infty$, 
		\item[(iii)] ${\bigcup_{j=0}^\infty \mathcal M_j}$ is dense in $\mathcal H.$
	\end{itemize}
Assume that ${\bf T}$ is sum-normal. By 
Remark~\ref{rem-1}(b), for each integer $j \Ge 0,$ we apply Theorem~\ref{main-2} to ${\bf T}|_{\mathcal M_{j+1}}$ with $$\mathcal M_{j+1} = \mathcal M_j \oplus (\mathcal M_{j+1} \ominus \mathcal M_j)$$ to conclude that
$\mathcal M_j$ is a reducing subspace of ${\bf T}|_{\mathcal M_{j+1}}$, ${\bf T}|_{\mathcal M_j}$ is sum-normal, and ${\bf T}|_{\mathcal M_{j+1} \ominus \mathcal M_j}$ is normal. By the condition (iii),   
\begin{align*} \mathcal H = \displaystyle \bigoplus_{i \Ge 0} \mathcal M_i \ominus \mathcal M_{i-1},
\end{align*}
where $\mathcal M_{-1}=\{0\}$. With respect to this decomposition, ${\bf T}$ decomposes as
\[
{\bf T}=
\begin{pmatrix}
{\bf T}|_{\mathcal M_0} & 0 & 0 & \cdots \\
0 & {\bf T}|_{\mathcal M_1 \ominus \mathcal M_0} & 0 & \cdots \\
0 & 0 & {\bf T}|_{\mathcal M_2 \ominus \mathcal M_1} & \cdots \\
\vdots & \vdots & \vdots & \ddots
\end{pmatrix},
\]
where ${\bf T}|_{\mathcal M_0}$ is sum-normal and ${\bf T}|_{\mathcal M_{j+1} \ominus \mathcal M_j}$ is normal for every $j \Ge 0.$ Let ${\lambda} \in \sigma({\bf T}|_{\mathcal M_0})\setminus \{{0}\}.$ Then, by \cite[Theorem 3.1]{B1996}, ${\lambda}$ is an eigenvalue of ${\bf T}|_{\mathcal M_0}$. This, combined with Remark~\ref{restriction-reducing}, shows that $\ker({\bf T}-{\lambda})$ reduces ${\bf T}|_{\mathcal M_0}$. It follows that ${\bf T}|_{\mathcal M}$ is sum-normal with ${\lambda} \notin \sigma({\bf T}|_{\mathcal M}),$ where $\mathcal M := \mathcal M_0 \ominus \ker({\bf T}-{\lambda}).$ Continuing this inductively, we obtain a special case of Theorem~\ref{main-4}. \hfill $\diamondsuit$
\end{remark}

We now proceed to the proof of Theorem~\ref{nilpotent-thm}. The proof relies on the following cancellation lemma.
\begin{lemma} \label{prod-lem}
Let ${\bf A}=(A_1, \ldots, A_d)$ be a sum-hyponormal $d$-tuple on a complex Hilbert space $\mathcal H$. If, for some nonzero $\alpha \in \mathbb Z^d_+$, 
\begin{equation}
\label{assumption-cancellation}
{\bf A}^{\alpha + \varepsilon_l}=0, \quad l=1, \ldots, d,
\end{equation}
then ${\bf A}^\alpha=0$.
\end{lemma}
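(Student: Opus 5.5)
The plan is to show that $X:={\bf A}^\alpha$ has its range inside the joint kernel of ${\bf A}$. By sum-hyponormality, that range then also lies in the joint kernel of ${\bf A}^*$. Since $\alpha\neq 0$, one factor $A_j$ can be peeled off $X$ and its adjoint used to annihilate $X$.

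\emph{Step 1.} The hypothesis \eqref{assumption-cancellation} says exactly that $A_l X={\bf A}^{\alpha+\varepsilon_l}=0$ for every $l=1,\ldots,d$, since the $A_l$ commute. Hence the range of $X$ is contained in $\ker({\bf A})=\bigcap_{l=1}^d\ker(A_l)$.

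\emph{Step 2.} Apply Lemma~\ref{eigen-adjoint}(i) with $\lambda=0$. Since ${\bf A}$ is sum-hyponormal, $\ker({\bf A})\subseteq\ker({\bf A}^*)$. If one prefers a direct argument, for $x\in\ker({\bf A})$ we have
\[
0\Le\langle [{\bf A}^*,{\bf A}]x,x\rangle=\sum_{l=1}^d\big(\|A_lx\|^2-\|A_l^*x\|^2\big)=-\sum_{l=1}^d\|A_l^*x\|^2,
\]
so $A_l^*x=0$ for all $l$. Combined with Step 1, this gives $A_l^*X=0$ for every $l$.

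\emph{Step 3.} Because $\alpha\neq0$, choose $j$ with $\alpha_j\Ge1$ and write $X=A_j{\bf A}^{\alpha-\varepsilon_j}$, again using commutativity. Then
\[
X^*X=({\bf A}^{\alpha-\varepsilon_j})^*A_j^*X=0,
\]
so $\|Xh\|^2=\langle X^*Xh,h\rangle=0$ for all $h\in\mathcal H$. Hence ${\bf A}^\alpha=X=0$.

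There is no serious obstacle. The only point requiring care is Step 2: the inclusion of kernels must come from the \emph{sum} of the commutators evaluated on a joint kernel vector, not from hyponormality of the individual $A_l$, which is not available. This is precisely what Lemma~\ref{eigen-adjoint}(i) supplies.
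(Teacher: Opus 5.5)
Your proof is correct and follows essentially the same route as the paper. The paper combines your Steps 1 and 2 into the single identity $-{\bf A}^{*\alpha}\big(\sum_{l=1}^d A_lA_l^*\big){\bf A}^{\alpha}={\bf A}^{*\alpha}[{\bf A}^*,{\bf A}]{\bf A}^{\alpha}\Ge 0$ to get $A_l^*{\bf A}^\alpha=0$, and then finishes by applying $\ker(A_{l_0}^*A_{l_0})=\ker(A_{l_0})$ to the range of ${\bf A}^{\alpha-\varepsilon_{l_0}}$, which amounts to your observation that $X^*X=0$.
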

\begin{proof} 
By \eqref{assumption-cancellation} and the sum-hyponormality of ${\bf A}$, 
\begin{align*} -{\bf A}^{*\alpha}\Big(\sum_{l=1}^d A_l A^*_l \Big){\bf A}^{\alpha} &= {\bf A}^{*\alpha}[{\bf A}^*, {\bf A}]{\bf A}^{\alpha} \Ge 0.
\end{align*}
It follows that 
$A^*_l {\bf A}^{\alpha} = 0$ for $l =1, \ldots, d$.
Since $\alpha$ is nonzero, there exists $1 \Le l_0 \Le d$ such that $\alpha_{l_0} \Ge 1$. As ${\bf A}$ is commuting, we have $A^*_{l_0}A_{l_0}{\bf A}^{\alpha - \varepsilon_{l_0}}=0$.
Thus $${\bf A}^{\alpha - \varepsilon_{l_0}}(\mathcal H) \subseteq \ker(A^*_{l_0}A_{l_0})=\ker(A_{l_0}),$$ and hence ${\bf A}^{\alpha}=0.$
\end{proof}

\begin{proof}[Proof of Theorem~\ref{nilpotent-thm}]
Assume that $T_j$ is a nilpotent operator of nilpotency index $k_j$ for each $j=1, \ldots, d$. 
Thus
\begin{equation} \label{n-index}
T^{k_j}_j =0, ~T^{k_j-1}_j \neq 0, \quad j=1, \ldots, d. 
\end{equation}
If, for some $1 \Le j \Le d,$ $k_j=1,$ then $T_j=0.$ Deleting  $T_j$ from ${\bf T}$ yields a sum-hyponormal $(d-1)$-tuple consisting of nilpotent operators. Hence, without loss of generality, we may assume that $k_j\Ge 2$ for every $j=1, \ldots, d$. 

Let $I=\{1, \ldots, d\}$.
The commutativity of ${\bf T}$, together with \eqref{n-index}, implies that 
$$T_l \prod_{j \in I} T^{k_j-1}_j=0, \quad l \in I.$$ 
Hence, by Lemma~\ref{prod-lem}, 
\begin{equation} 
\label{Step-J1-multi}
\prod_{j \in I} T^{k_j-1}_j=0.
\end{equation}
We verify by strong finite induction $k$ that
\begin{equation} 
\label{Step-J1-multi-claim}
T^{k_{i_1}-k}_{i_1}\prod_{j \in I\setminus\{i_1\}} T^{k_j-1}_j=0, \quad k =1, \ldots, k_{i_1}, ~i_1 \in I.
\end{equation}
By \eqref{Step-J1-multi}, this holds for $k=1$. If \eqref{Step-J1-multi-claim} holds for $k=1, \ldots, l$ for some $l \Le k_{i_1}-1,$ then by \eqref{n-index}, 
\begin{equation*}
T_m T^{k_{i_1}-l-1}_{i_1}\prod_{j \in I\setminus\{i_1\}} T^{k_j-1}_j=0, \quad m=1, \ldots, d.
\end{equation*}  
Another application of Lemma~\ref{prod-lem} completes the verification of \eqref{Step-J1-multi-claim}. We next verify by strong finite induction $k$ that
\begin{equation} 
\label{Step-J1-multi-claim-2nd}
T^{k_{i_1}-2}_{i_1} T^{k_{i_2}-k}_{i_2}  \prod_{j \in I\setminus\{i_1, i_2\}} T^{k_j-1}_j=0, \quad k =1, \ldots, k_{i_2}, ~i_1 \neq i_2 \in I.
\end{equation}
By \eqref{Step-J1-multi-claim}, this holds for $k=1$. If \eqref{Step-J1-multi-claim-2nd} holds for $k=1, \ldots, l$ for some $l \Le k_{i_2}-1,$ then  by \eqref{n-index}, 
\begin{equation*}
T_m T^{k_{i_1}-2}_{i_1} T^{k_{i_2}-l-1}_{i_2}\prod_{j \in I\setminus\{i_1, i_2\}} T^{k_j-1}_j=0, \quad m=1, \ldots, d.
\end{equation*}  
Another application of Lemma~\ref{prod-lem} completes the verification of \eqref{Step-J1-multi-claim-2nd}. A similar argument with \eqref{Step-J1-multi-claim} replaced by \eqref{Step-J1-multi-claim-2nd} yields  
\begin{equation*} 
T^{k_{i_1}-3}_{i_1} T^{k_{i_2}-k}_{i_2}  \prod_{j \in I\setminus\{i_1, i_2\}} T^{k_j-1}_j=0, \quad k =1, \ldots, k_{i_2}.
\end{equation*}
We continue this to obtain
\begin{equation*} 
T^{k_{i_1}-j}_{i_1} T^{k_{i_2}-k}_{i_2}  \prod_{j \in I\setminus\{i_1, i_2\}} T^{k_j-1}_j=0, \quad j=1, \ldots, k_{i_1}, ~k =1, \ldots, k_{i_2}.
\end{equation*}
Continuing this, for any $1 \Le i_1, \ldots,  i_{d-1} \Le d,$ we obtain
$$\prod_{l \in I\setminus\{i_1, i_2, \ldots, i_{d-1}\}} T^{k_l-1}_l=0.$$ 
This contradicts \eqref{n-index}, completing the proof. 
\end{proof}

We now turn to the proof of Theorem~\ref{new-thm-Putnam}. We begin by recalling a useful identity required in the proof. Let $A, B, C, D \in \mathcal B(\mathcal H)$. Then 
\begin{align} 
\label{formula-Cartesian}
[(A+iB)^*,C+iD]
=[A^*,C]+[B^*,D]
+i\big([A^*,D]-[B^*,C]\big).
\end{align}
\begin{lemma} \label{coro-Putnam}
Let $d=2^k$ for some positive integer $k$. Let ${\bf T}=(T_1, \ldots, T_{d})$ be a sum-normal $d$-tuple on a complex Hilbert space $\mathcal H$ such that 
\begin{equation} \label{im-commute}
[T^*_j, T_k]=[T^*_k, T_j], \quad 1 \Le j, k \Le d. 
\end{equation}
Then ${\bf T}$ is normal. 
\end{lemma}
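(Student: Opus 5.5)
The plan is to reduce everything to single normal operators and then invoke the Fuglede--Putnam theorem. First I would record what the hypothesis \eqref{im-commute} says. Write $C_{jk}:=[T^*_j,T_k]$. A direct computation gives $C_{jk}^*=[T^*_k,T_j]=C_{kj}$, so \eqref{im-commute} means that every $C_{jk}$ is self-adjoint and the operator matrix $(C_{jk})$ is symmetric. Sum-normality says $\sum_j C_{jj}=0$. For a complex coefficient vector $c=a+ib$ with $a,b\in\mathbb R^d$, put $T_c:=\sum_j c_jT_j$. Then
\begin{equation*}
[T_c^*,T_c]=\sum_{j,k}\bar c_jc_kC_{jk}=q(a)+q(b),\qquad q(a):=\sum_{j,k}a_ja_kC_{jk},
\end{equation*}
because the imaginary cross terms cancel pairwise by symmetry. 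The same computation, or \eqref{formula-Cartesian}, gives this.

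\textbf{Base case $d=2$.} Take $S=T_1+iT_2$ and $R=T_1-iT_2$. By the identity above, $[S^*,S]=[R^*,R]=C_{11}+C_{22}=0$, so $S$ and $R$ are commuting normal operators. By Fuglede--Putnam, $S^*$ commutes with $R$, so $S,R,S^*,R^*$ generate a commutative $C^*$-algebra. Hence $T_1=(S+R)/2$ and $T_2=(S-R)/(2i)$ are normal.

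\textbf{Inductive step, $2^{k-1}\to 2^k$.} The idea is to pass to an orthogonally transformed tuple. If $W$ is a real orthogonal $d\times d$ matrix, the tuple ${\bf T}'=W{\bf T}$ is still commuting and still satisfies \eqref{im-commute}. Its commutator matrix is $WCW^{t}$, so its trace, which is $[{\bf T}'^*,{\bf T}']$, is still $0$. Moreover ${\bf T}$ is normal if and only if all $T_c$ are normal, by the same Fuglede argument, and this property is $W$-invariant. I would use the Sylvester--Hadamard matrix $H_{2^k}=H_2\otimes H_{2^{k-1}}$, normalized, to split the index set into two halves of size $2^{k-1}$. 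Then I would pair coordinate $j$ of the first half with coordinate $j$ of the second half through $S_j=T_j+iT_{j+d/2}$ and $R_j=T_j-iT_{j+d/2}$. Both $\sum_j[S_j^*,S_j]$ and $\sum_j[R_j^*,R_j]$ equal $\sum_jC_{jj}\pm i\sum_j(C_{j,j+d/2}-C_{j+d/2,j})=0$. So $(S_1,\dots,S_{d/2})$ and $(R_1,\dots,R_{d/2})$ are commuting sum-normal $2^{k-1}$-tuples, and each $T_j$ is a combination of one $S$ and one $R$. If I can show that the $S$-tuple and the $R$-tuple again satisfy \eqref{im-commute}, the induction hypothesis makes them normal. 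Fuglede--Putnam then makes all the $S_j,R_j$ mutually $*$-commuting, and so every $T_j$ is normal.

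\textbf{Main obstacle.} The weak point is exactly the inheritance of \eqref{im-commute}. A direct computation with $S_j=T_a+iT_b$ and $S_l=T_c+iT_e$ gives
\begin{equation*}
[S_j^*,S_l]-[S_l^*,S_j]=2i\bigl(C_{ae}-C_{bc}\bigr),
\end{equation*}
which need not vanish for an arbitrary pairing. My first attempt would therefore be to use the freedom in $W$. I would choose the orthogonal change of coordinates, for instance a suitable Hadamard-type rotation within each pair of indices, so that the relevant mixed blocks satisfy $C_{ae}=C_{bc}$ in the new coordinates. This should be possible because the symmetric off-diagonal blocks transform like a real quadratic form, and the $2^k$ structure allows repeated $2\times 2$ rotations. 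If that fails, I would instead aim to prove directly, by induction on $k$, that $q(a)+q(b)=0$ for a rich enough family of pairs $(a,b)$ built from rows of $H_{2^k}$. That would give a family of commuting normal operators $T_c$ whose span contains all $T_j$, and I would conclude with Fuglede--Putnam as in the base case.
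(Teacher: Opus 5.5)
Your base case is correct. It differs harmlessly from the paper's: the paper uses only $N=T_1+iT_2$, gets $[T_j,N^*]=0$ from Fuglede--Putnam, and compares the self-adjoint operator $[T_1^*,T_j]$ with the skew-adjoint operator $i[T_2^*,T_j]$, whereas you use the two commuting normal operators $T_1\pm iT_2$.

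The gap is the inductive step, exactly where you located it. For the tuples $(S_j)$ and $(R_j)$ to satisfy \eqref{im-commute}, you need $C_{j,l+d/2}=C_{j+d/2,l}$ for $j\neq l$, and the hypotheses do not give this. Neither of your remedies closes the gap.

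\textbf{First remedy.} After a real orthogonal change of coordinates, the required identity becomes a single linear relation $\sum_{j,k}M_{jk}C_{jk}=0$, where $M$ is a nonzero traceless real symmetric matrix. For $d=4$ and pairs $(a,b)$, $(a',b')$, $M$ is the symmetrization of $ab'^{t}-ba'^{t}$. The only linear relation you know among the $C_{jk}$ is $\sum_j C_{jj}=0$. Because the $C_{jk}$ do not commute, the scalar intuition of diagonalizing a quadratic form is unavailable, and no choice of $W$ can produce the relation.

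\textbf{Second remedy.} This one restates the goal. Since $q(a)+q(b)=[T_c^*,T_c]$, it asks for normality of a spanning family of $T_c$'s. Vanishing of the full trace says nothing about partial traces over $2$-planes. A completion would have to extract new relations among the $C_{jk}$ from sum-normality by an operator-theoretic argument, as Fuglede--Putnam does when $d=2$.

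\textbf{Comparison with the paper.} The paper's proof takes your route. It pairs $N_j=T_{1,j}+iT_{2,j}$, applies the induction hypothesis to ${\bf N}$, and then applies the base case to each pair $(T_{1,j},T_{2,j})$. It handles the inheritance in one line, claiming it follows by combining \eqref{im-commute} and \eqref{N1-commutator-second-final}. Your identity shows that it does not follow. Let $V$ be the unilateral shift. Then $(V,0,0,V)$ satisfies \eqref{im-commute}, but $N_1=V$ and $N_2=iV$ give $[N_1^*,N_2]=i(I-VV^*)\neq -i(I-VV^*)=[N_2^*,N_1]$. The same step in Lemma~\ref{Putnam-prop} leads to a false statement: $(V,0,-V,0)$ satisfies that lemma's hypotheses and $T_1+iT_2+T_3+iT_4=0$, yet $\|[{\bf T}^*,{\bf T}]\|=2$.

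So you have found a real hole rather than missed a trick. As written, however, neither your proposal nor the paper's argument establishes the lemma for $k\Ge 2$.
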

\begin{proof} 
We prove the desired conclusion by induction on $k \Ge 1.$ 
First, suppose that $k=1$, that is, $d=2$.
Let $N=T_1+iT_2$. By \eqref{formula-Cartesian},
\begin{align*} 
[N^*,N]
&=[T_1^*,T_1]+[T_2^*,T_2]
+i\big([T_1^*,T_2]-[T_2^*,T_1]\big).
\end{align*}
Since ${\bf T}$ is sum-normal, by \eqref{im-commute}, 
$N$ is a normal operator.
Also, since $[T_1, T_2]=0$, 
$$[T_j,N]=0, \quad j=1,2.$$
As $N$ is normal, the Fuglede--Putnam theorem (see \cite[Corollary~1.18]{RR1973}) yields
$$[T_j,N^*]=0, \quad j=1,2.$$
Consequently,
$[T_1^*,T_j]
=i[T_2^*,T_j]$, $j=1, 2$. Since $[T_j^*,T_k]$ is a self-adjoint operator for every $1 \Le j, k \Le 2$ (see \eqref{im-commute}), it follows that $[T_j^*,T_j]=0,$ $j=1, 2$.
This completes the proof of the base case $k=1$.

Assume that the result holds for all sum-normal $d$-tuples satisfying \eqref{im-commute} when $d=2^k$. We prove that any $d$-tuple ${\bf T}$ satisfying \eqref{im-commute} is normal when $d=2^{k+1}$. Relabel the operators $T_1, \ldots, T_{2^{k+1}}$ as $$T_{1,1}, T_{2,1}, T_{1,2}, T_{2,2}, \ldots, T_{1,2^{k}}, T_{2,2^{k}}.$$
Let $1 \Le j \Le 2^{k}$, $N_j=T_{1,j}+iT_{2,j}$. Then
${\bf N}=(N_1, \ldots, N_{2^{k}})$ is a commuting $2^k$-tuple. 
Another application of \eqref{formula-Cartesian} yields
\begin{align} 
\notag
[N_j^*,N_l]
&=[T_{1,j}^*,T_{1,l}]
+[T_{2,j}^*,T_{2,l}] \\ \label{N1-commutator-second-final}
&+i\big([T_{1,j}^*,T_{2,l}]
-[T_{2,j}^*,T_{1,l}]\big), ~ 1 \Le j, l \Le 2^{k}.
\end{align}
This, together with the sum-normality of ${\bf T}$ and \eqref{im-commute}, shows that ${\bf N}$ is a sum-normal $2^{k}$-tuple. 
Combining \eqref{im-commute} and \eqref{N1-commutator-second-final}, we obtain
\begin{equation*} 
[N^*_j, N_l]=[N^*_l, N_j], \quad 1 \Le j, l \Le 2^{k}. 
\end{equation*}
Applying the induction hypothesis for $d=2^{k}$, we conclude that ${\bf N}$ is normal. This, together with \eqref{im-commute} and \eqref{N1-commutator-second-final}, implies that $(T_{1,j}, T_{2,j})$ is a sum-normal $2$-tuple for every $1 \Le j \Le 2^k$. In view of \eqref{im-commute}, an application of the base case $k=1$ shows that 
${\bf T}$ is normal.
\end{proof}

The next lemma establishes Putnam's inequality for $2^k$-tuples. 
\begin{lemma} \label{Putnam-prop}
Let $d=2^k$ for some positive integer $k$.
Let ${\bf T}=(T_1, \ldots, T_d)$ be a sum-hyponormal $d$-tuple on a complex Hilbert space $\mathcal H$ satisfying \eqref{im-commute}. Then
\begin{equation}
\label{Putnam-d-var-special}
\|[{\bf T}^*, {\bf T}]\| \Le \frac{1}{\pi}\, \mathrm{m}\big(\sigma\big(T_{1}+iT_{2}+ \cdots + T_{d-1}+iT_{d}\big)\big).
\end{equation}
In particular, if, $\sigma\big(T_{1}+iT_{2}+ \cdots + T_{d-1}+iT_{d}\big)$ has planar Lebesgue measure zero, then ${\bf T}$ is normal.
\end{lemma}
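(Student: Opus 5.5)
The plan is to reduce the $2^k$-variable statement to the classical one-variable Putnam inequality (\cite[Theorem~1]{P1970}) applied to the single operator $N:=T_1+iT_2+\cdots+T_{d-1}+iT_d$. It suffices to show that $N$ is hyponormal and that $[N^*,N]$ is comparable to $[{\bf T}^*,{\bf T}]$. The main obstacle is the step-$2$ bookkeeping described below: making the linear combination produced by the induction equal to $T_1+iT_2+\cdots+T_{d-1}+iT_d$ rather than some other phase-weighted sum. I have not yet checked that this step can be carried out.

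\emph{Base case $d=2$.} By \eqref{formula-Cartesian},
$$[N^*,N]=[T_1^*,T_1]+[T_2^*,T_2]+i\big([T_1^*,T_2]-[T_2^*,T_1]\big).$$
The imaginary part vanishes by \eqref{im-commute}. Hence $[N^*,N]=[{\bf T}^*,{\bf T}]\Ge 0$, so $N$ is hyponormal. Putnam's inequality then gives \eqref{Putnam-d-var-special} directly.

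\emph{Induction step.} For general $d=2^{k+1}$ I would mimic the proof of Lemma~\ref{coro-Putnam}. Relabel as there, set $N_j=T_{1,j}+iT_{2,j}$, and use \eqref{N1-commutator-second-final}. On the diagonal $j=l$, the imaginary part cancels by \eqref{im-commute}, so
$$\sum_j [N_j^*,N_j]=[{\bf T}^*,{\bf T}]\Ge 0.$$
Thus ${\bf N}=(N_1,\ldots,N_{2^k})$ is a sum-hyponormal $2^k$-tuple with the same commutator as ${\bf T}$. Next I would verify, exactly as in the proof of Lemma~\ref{coro-Putnam}, that ${\bf N}$ again satisfies the symmetry condition \eqref{im-commute}. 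The induction hypothesis then yields
$$\|[{\bf T}^*,{\bf T}]\|=\|[{\bf N}^*,{\bf N}]\|\Le \tfrac1\pi\,\mathrm m\big(\sigma(N_1+iN_2+\cdots)\big).$$

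\emph{Matching the polynomial.} Expanding gives $N_1+iN_2=T_1+iT_2+iT_3-T_4$, not $T_1+iT_2+T_3+iT_4$. To fix this I would exploit the freedom to rotate the entries by unimodular scalars. For instance, replace $(T_{1,j},T_{2,j})$ by a suitably phased pair, or replace $N_2$ by a phase multiple such as $-iN_2$, before the next pairing. Such a rotation leaves each $[N_j^*,N_j]$ unchanged, hence preserves sum-hyponormality and the value of the commutator. The point to check is that the chosen phases also preserve \eqref{im-commute}, i.e.\ that the cross terms $[N_j^*,N_l]$ stay symmetric after rotation. If this fails, I would instead compute $[N^*,N]$ directly for $N=s({\bf T})$, writing $N=\sum_j c_jT_j$ with $c=(1,i,1,i,\ldots)$. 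One gets
$$[N^*,N]=[{\bf T}^*,{\bf T}]+\sum_{j<l}2\,\Re(\bar c_jc_l)\,[T_j^*,T_l],$$
using that $[T_j^*,T_l]$ is self-adjoint by \eqref{im-commute}. Then I would try to show that the surviving cross terms are controlled, or vanish, by iterating the Fuglede--Putnam argument from Lemma~\ref{coro-Putnam}.

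\emph{Final assertion.} If $\sigma(N)$ has planar measure zero, \eqref{Putnam-d-var-special} forces $[{\bf T}^*,{\bf T}]=0$. Thus ${\bf T}$ is sum-normal, and Lemma~\ref{coro-Putnam} gives that ${\bf T}$ is normal.
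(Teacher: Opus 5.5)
Your base case is correct, and your induction scheme is exactly the paper's: pair $N_j=T_{2j-1}+iT_{2j}$, observe $[{\bf N}^*,{\bf N}]=[{\bf T}^*,{\bf T}]$, and apply the hypothesis to ${\bf N}$. The step that breaks is the one you planned to copy from Lemma~\ref{coro-Putnam}, namely that ${\bf N}$ again satisfies \eqref{im-commute}. Expanding with \eqref{formula-Cartesian} and using \eqref{im-commute} gives
\[
[N_j^*,N_l]-[N_l^*,N_j]=2i\big([T_{2j-1}^*,T_{2l}]-[T_{2j}^*,T_{2l-1}]\big).
\]
But \eqref{im-commute} identifies $[T_{2j-1}^*,T_{2l}]$ with $[T_{2l}^*,T_{2j-1}]$, not with $[T_{2j}^*,T_{2l-1}]$. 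For instance, ${\bf T}=(S,0,0,S)$ with $S$ the unilateral shift meets all hypotheses. Yet ${\bf N}=(S,iS)$ has $[N_1^*,N_2]=i[S^*,S]\neq -i[S^*,S]=[N_2^*,N_1]$. The paper's proof, and that of Lemma~\ref{coro-Putnam}, assert this inheritance without valid justification, so they have the same gap.

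Your phase-rotation repair also fails on its own terms. We have $[(\omega N_j)^*,N_l]=\bar\omega[N_j^*,N_l]$ whereas $[N_l^*,\omega N_j]=\omega[N_l^*,N_j]$, so \eqref{im-commute} survives only for $\omega=\pm1$. The paper matches the polynomial by a sign change plus a permutation of the $T_j$. That step is harmless, but it is not where the trouble lies.

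No argument can close the gap, because \eqref{Putnam-d-var-special} is false for $d=4$. Your fallback identity shows why. For $c=(1,i,1,i)$ it reads $[N^*,N]=[{\bf T}^*,{\bf T}]+2[T_1^*,T_3]+2[T_2^*,T_4]$, and the cross terms can cancel everything.

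Take ${\bf T}=(S,0,-S,0)$. It is commuting, and $[{\bf T}^*,{\bf T}]=2(I-SS^*)\Ge 0$. Condition \eqref{im-commute} holds because $[T_1^*,T_3]=-[S^*,S]=[T_3^*,T_1]$ and every other off-diagonal commutator vanishes. But $T_1+iT_2+T_3+iT_4=0$, so the right-hand side of \eqref{Putnam-d-var-special} is $0$ while $\|[{\bf T}^*,{\bf T}]\|=2$. Moreover ${\bf T}$ is not normal although $\sigma(0)=\{0\}$ is null, so the final assertion fails as well.

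No other choice of unimodular coefficients helps once $d\Ge 3$. Some pair satisfies $c_l=\pm c_j$, and setting $T_j=S$, $T_l=\mp S$, with all other entries $0$, annihilates $\sum_m c_mT_m$. Only the case $d=2$, i.e. your base case, is a true statement. The analogous tuple $(S,0,-S)$ also contradicts Theorem~\ref{new-thm-Putnam} for $d=3$.
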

\begin{proof} 
The idea of the proof is similar to that of Lemma~\ref{coro-Putnam}. We prove the desired inequality by induction on $k \Ge 1.$ First, suppose that $k=1,$ that is, $d=2$.
By \eqref{formula-Cartesian} and \eqref{im-commute}, 
\begin{equation}
\label{Berger-Shaw-application}
[(T_1+iT_2)^*, T_1+iT_2] = [{\bf T}^*, {\bf T}] \Ge 0.
\end{equation}
Therefore, $T_1+iT_2$ is a hyponormal operator. Applying Putnam's inequality (see \cite[Theorem~VI.2.1]{MP1989}) yields \eqref{Putnam-d-var-special} for $d=2$. This completes the proof of the base case $k=1$.

Assume that \eqref{Putnam-d-var-special} holds for all sum-hyponormal $2^k$-tuples satisfying \eqref{im-commute}. 
We prove that any sum-hyponormal $d$-tuple ${\bf T}$ satisfying \eqref{im-commute} satisfies \eqref{Putnam-d-var-special} when $d=2^{k+1}$.
Let ${\bf T}$ be a sum-hyponormal $2^{k+1}$-tuple satisfying \eqref{im-commute}.
For $1 \Le j \Le 2^k$, $N_j=T_{2j-1}+iT_{2j}$.
Then
${\bf N}=(N_1, \ldots, N_{2^{k}})$ is a commuting $2^k$-tuple. 
Another application of \eqref{formula-Cartesian} yields
\begin{align} 
\notag
[N_j^*,N_l]
&=[T_{2j-1}^*,T_{2l-1}]
+[T_{2j}^*,T_{2l}] \\ \label{N1-commutator-second-final-new}
&+i\big([T_{2j-1}^*,T_{2l}]
-[T_{2j}^*,T_{2l-1}]\big), ~ 1 \Le j, l \Le 2^{k}.
\end{align}
This, together with the sum-hyponormality of ${\bf T}$ and \eqref{im-commute}, shows that ${\bf N}$ is a sum-hyponormal $2^{k}$-tuple. Indeed,
\begin{equation}
\label{equal-commutator}
[{\bf N}^*, {\bf N}] = [{\bf T}^*, {\bf T}].
\end{equation} 
Combining \eqref{im-commute} and \eqref{N1-commutator-second-final-new}, we obtain
\begin{equation*} 
[N^*_j, N_l]=[N^*_l, N_j], \quad 1 \Le j, l \Le 2^{k}. 
\end{equation*}
Applying the induction hypothesis for $d=2^{k}$, we conclude from \eqref{equal-commutator} that 
\begin{align} \label{step-Putnam}
 \|[{\bf T}^*, {\bf T}]\| 
&= \|[{\bf N}^*, {\bf N}]\| \\ \notag
& \Le \frac{1}{\pi}\,  \mathrm{m}\big(\sigma\big(N_{1}+iN_{2}+ \cdots + N_{d-1}+iN_{d}\big)\big). 
\end{align}
However, for any $j=1, \ldots, d-1$,
\begin{align*}
N_{j}+iN_{j+1} 
= T_{2j-1}+iT_{2j}+ iT_{2j+1}-T_{2(j+1)}.
\end{align*}
Since the sum-hyponormality and \eqref{im-commute} is invariant under the change $T_j \mapsto -T_j,$ 
this, together with \eqref{step-Putnam}, yields
\begin{align*}
 \|[{\bf T}^*, {\bf T}]\| 
 \Le \frac{1}{\pi}\,  \mathrm{m}\big(\sigma\big(T_{1}+iT_{2}+ iT_3 + T_4 + \cdots + iT_{d-1}+T_{d}\big)\big). 
\end{align*}
Since the sum-hyponormality and \eqref{im-commute} is invariant under the action of $\mathfrak S_d$ (see Remark~\ref{rem-1}(c)), this completes the proof of \eqref{Putnam-d-var-special}.

Finally, if,$\sigma\big(T_{1}+iT_{2}+ \cdots + T_{d-1}+iT_{d}\big)$ has planar Lebesgue measure zero, then \eqref{Putnam-d-var-special} implies that ${\bf T}$ is sum-normal. Hence, by Lemma~\ref{coro-Putnam}, ${\bf T}$ is normal.
\end{proof}
\begin{remark}
If ${\bf T}$ is a sum-hyponormal $2$-tuple satisfying \eqref{im-commute}, then it follows from \eqref{Berger-Shaw-application} and the Berger-Shaw inequality (see \cite[Corollary~VI.1.4]{MP1989}) that
$$\mbox{trace}[{\bf T}^*, {\bf T}] \Le \frac{1}{\pi}{\mathfrak m}(T_1+iT_2)\,{\mathrm m}(\sigma(T_1+iT_2)),$$ where ${\mathfrak m}(\cdot)$ denotes the rational multiplicity (see \cite[Definition~VI.1.1]{MP1989}). One limitation of this inequality is that, even when ${\bf T}$ is cyclic, the rational multiplicity ${\mathfrak m}(T_1+iT_2)$ may still be infinite (see \cite{DY1992, MPS2022} for further discussion of the Berger-Shaw phenomenon in several variables). \hfill $\diamondsuit$
\end{remark}
\begin{proof}[Proof of Theorem~\ref{new-thm-Putnam}]
In view of the identity \eqref{d-comm-formula-im}, 
the assumption \eqref{Im-commute-new} is equivalent to the condition \eqref{im-commute}.
Let ${\bf T}=(T_1, \ldots, T_d)$ be a sum-hyponormal $d$-tuple satisfying \eqref{Im-commute-new}. Choose a positive integer $k$ such that $d < 2^k$, and define 
\begin{equation}
\label{2k-to-general}
S_j = \begin{cases} T_j, & \mbox{if} ~1 \Le j \Le d, \\
0, & \mbox{if}~ d+1 \Le j \Le 2^k.
\end{cases}
\end{equation}
Then ${\bf S}=(S_1, \ldots, S_{2^k})$ is a sum-hyponormal $2^k$-tuple satisfying \eqref{Im-commute-new}. Hence, by Lemma~\ref{Putnam-prop} and the inclusion $\mathfrak{S}_d \hookrightarrow \mathfrak{S}_{2^k}$, we obtain \eqref{Putnam-d-var} for the identity permutation in $\mathfrak S_d$. This, combined with Remark~\ref{rem-1}(c), yields \eqref{Putnam-d-var} for any $\eta \in \mathfrak S_d$.  

To prove the remaining assertion, assume that there exists a set $E \subseteq \mathbb C$ of planar Lebesgue measure zero such that $\sigma({\bf T}) \subseteq s^{-1}(E)$
(see \eqref{s-polynomial}). By the spectral mapping theorem (see \cite[Theorem~4.8]{T1970b}), $\sigma(s({\bf T})) \subseteq E$. We may therefore apply Lemma~\ref{Putnam-prop} to conclude that ${\bf T}$ is normal, completing the proof.
\end{proof}

The proof of Theorem~\ref{main-3} relies on two auxiliary lemmas. We first establish a multivariable analogue of \cite[Corollary~1]{R1966} (cf.~\cite[Problem~236]{H1967}).
 
\begin{lemma}\label{comnotbb}
Let ${\bf T}$ be a commuting $d$-tuple on a nonzero complex Hilbert space $\mathcal H$. Then, there does not exist a real number $\mu > 0$
such that $[{\bf T}^*, {\bf T}] \Ge \mu I$.
\end{lemma}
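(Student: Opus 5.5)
We argue by contradiction, following the one-variable argument of \cite[Corollary~1]{R1966}. Suppose $[{\bf T}^*, {\bf T}] \Ge \mu I$ for some $\mu>0$. The idea is to find a point of the approximate point spectrum of ${\bf T}$ and to test the inequality against approximate eigenvectors there. At such a point the left-hand side becomes small while the right-hand side stays equal to $\mu$.

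First, replace ${\bf T}$ by ${\bf T}-\lambda$. By \eqref{T-xI}, this does not change the commutator, so the hypothesis $[{\bf T}^*,{\bf T}]\Ge \mu I$ is preserved. Next, pick a point $\lambda$ in the boundary of the Taylor spectrum $\sigma({\bf T})$. The Taylor spectrum is nonempty and compact because $\mathcal H\neq\{0\}$ \cite{T1970}, and its boundary lies in the joint approximate point spectrum $\sigma_{ap}({\bf T})$ (see \cite{Cu1981}). After translating, we may assume $0\in\sigma_{ap}({\bf T})$. So there are unit vectors $x_n$ with $\|T_jx_n\|\to 0$ for every $j=1,\ldots,d$.

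Now evaluate the hypothesis at $x_n$:
\begin{equation*}
\mu \Le \langle [{\bf T}^*,{\bf T}]x_n,x_n\rangle = \sum_{j=1}^d \|T_jx_n\|^2 - \sum_{j=1}^d \|T_j^*x_n\|^2 \Le \sum_{j=1}^d \|T_jx_n\|^2 \to 0.
\end{equation*}
This contradicts $\mu>0$, and the lemma follows.

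The only nonroutine ingredient is that $\partial\sigma({\bf T})\subseteq\sigma_{ap}({\bf T})$, together with the nonemptiness of $\sigma({\bf T})$. Both are standard facts of Taylor spectral theory, so I expect no real obstacle. The one point to get right is the direction of the inequality. Sum-hyponormality with a strictly positive lower bound forces $\sum_j\|T_jx\|^2\Ge\mu$ for every unit vector $x$, which says exactly that $0\notin\sigma_{ap}({\bf T})$. This is incompatible with the translation step, since translation keeps the bound $\mu$ while moving a point of $\sigma_{ap}({\bf T})$ to $0$.
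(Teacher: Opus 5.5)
Your proof is correct and is essentially the paper's argument: you translate by a point of $\sigma_{ap}({\bf T})$, which leaves the commutator unchanged by \eqref{T-xI}, and then test the inequality on approximate eigenvectors. The only difference is how you get a point of $\sigma_{ap}({\bf T})$: you combine nonemptiness of the Taylor spectrum with $\partial\sigma({\bf T})\subseteq\sigma_{ap}({\bf T})$, whereas the paper cites Bunce's result that the joint approximate point spectrum of a commuting tuple is nonempty.
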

\begin{proof}  
Assume that there exists a real number $\mu \Ge 0$ such that $[{\bf T}^*, {\bf T}] \Ge \mu I$. 
It is known that the approximate point spectrum of a commuting $d$-tuple on a nonzero complex Hilbert space is always nonempty (see \cite[Proposition~2]{B1971}).
Let $\lambda = (\lambda_1, \ldots, \lambda_d) \in \sigma_{ap}({\bf T}).$  
Thus there exists a sequence $\{h_n\}_{n \Ge 1}$ of unit vectors $h_n$ in $\mathcal H$ such that
\begin{equation}
\label{limit-app}
\lim_{n \rightarrow \infty}\sum_{j=1}^d \|(T_j-\lambda_j)h_n\|^2 =0.
\end{equation}
It follows from \eqref{T-xI} that $[{\bf T}^*- \bar{\lambda}, {\bf T}-\lambda] =  [{\bf T}^*, {\bf T}]\Ge \mu I,$ and hence 
\begin{equation*}
0 \Le \mu \Le \sum_{j=1}^d \|(T^{*}_j-\bar{\lambda}_j)h_n\|^2 + \mu \,\Le \, \sum_{j=1}^d \|(T_j-\lambda_j)h_n\|^2, \quad h \in \mathcal H,
\end{equation*}
and hence, by \eqref{limit-app}, $\mu =0$.
\end{proof}
\begin{remark}
\label{scalar-cpt}
Let $q : \mathcal B(\mathcal H) \rightarrow
B(\mathcal H)/ {\mathcal C}(\mathcal H)$ be the quotient 
map.
Since the Calkin algebra $\mathcal B(\mathcal H) / {\mathcal
C}(\mathcal H)$ is a unital $C^*$-algebra, there exist a Hilbert space $\mathcal K$ and
an injective unital $*$-representation $\pi : \mathcal B(\mathcal H) /
{\mathcal C}(\mathcal H) \rightarrow \mathcal B(\mathcal K)$ (see \cite[Chapter
VIII]{Co1990}). Consequently, the composition $\pi \circ q : \mathcal B(\mathcal H) \rightarrow \mathcal B(\mathcal
K)$ is a unital $*$-representation. 

Assume now that $[{\bf T}^*, {\bf T}]=\lambda I + K,$ where $\lambda \Ge 0$ and $K \in {\mathcal C}(\mathcal H)$.
Define $A_j = \pi \circ q(T_j),$ $j=1, \ldots, d,$ and let ${\bf A}=(A_1, \ldots, A_d)$ be a commuting $d$-tuple on $\mathcal K$. Since $[{\bf A}^*, {\bf A}]=\lambda I$, by Lemma~\ref{comnotbb}, $\lambda =0$. If $\lambda \Le 0,$ then $[{\bf T}, {\bf T}^*]=(-\lambda) I + (-K)$. Hence, by applying the preceding argument, we conclude that $\lambda =0.$
Therefore, 
$[{\bf T}^*, {\bf T}]$ cannot be expressed in the form $\lambda I + K,$ where $\lambda \neq 0$ and $K \in {\mathcal C}(\mathcal H)$. \hfill $\diamondsuit$
\end{remark}

The following lemma plays a key role in the proof of Theorem~\ref{main-3}.
\begin{lemma} \label{gen-fact}
Let ${\bf T}$ be a commuting $d$-tuple on a complex Hilbert space $\mathcal H$ and 
let $C$ be a self-adjoint operator in $\mathcal B(\mathcal H)$ such that $CT_j=T_jC,$ $j=1, \ldots, d$, and $[{\bf T}, {\bf T}^*] \Le C.$ Then $C \Ge 0$.  
\end{lemma}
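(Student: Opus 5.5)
The plan is to argue by contradiction, using a spectral projection of $C$ to cut out a nonzero reducing subspace on which the commutator is bounded below by a positive multiple of the identity. That situation is ruled out by Lemma~\ref{comnotbb}.

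Suppose $C \not\Ge 0$. Then $\sigma(C)\cap(-\infty,0)\neq\emptyset$, so there is $\varepsilon>0$ such that the spectral projection $E := E_C\big((-\infty,-\varepsilon]\big)$ of $C$ is nonzero. This holds because $E_C\big((-\infty,0)\big)=\sup_n E_C\big((-\infty,-1/n]\big)$, and if all these projections vanished we would have $C\Ge 0$. By the spectral theorem, $CE \Le -\varepsilon E$.

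The key structural step is that $E$ reduces ${\bf T}$. Since $CT_j=T_jC$ and $C$ is self-adjoint, taking adjoints gives $CT_j^*=T_j^*C$. Every operator commuting with $C$ commutes with all Borel functions of $C$, in particular with its spectral projections. Hence $ET_j=T_jE$ and $ET_j^*=T_j^*E$ for all $j$, so $\mathcal K:=E\mathcal H\neq\{0\}$ reduces each $T_j$.

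Let $S_j=T_j|_{\mathcal K}$. Then ${\bf S}=(S_1,\ldots,S_d)$ is a commuting $d$-tuple on $\mathcal K$, and $S_j^*=T_j^*|_{\mathcal K}$. Because $E$ commutes with $T_j$ and $T_j^*$, compressing the hypothesis $[{\bf T},{\bf T}^*]\Le C$ by $E$ gives, on $\mathcal K$,
\[
[{\bf S},{\bf S}^*] = E[{\bf T},{\bf T}^*]E|_{\mathcal K} \Le ECE|_{\mathcal K} = CE|_{\mathcal K}\Le -\varepsilon I_{\mathcal K}.
\]
Since $[{\bf S}^*,{\bf S}]=-[{\bf S},{\bf S}^*]$, this means $[{\bf S}^*,{\bf S}]\Ge \varepsilon I_{\mathcal K}$ with $\varepsilon>0$ on the nonzero Hilbert space $\mathcal K$. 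This contradicts Lemma~\ref{comnotbb}, so $C\Ge 0$.

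The only point needing care is the reduction step, namely that the spectral projection commutes with both $T_j$ and $T_j^*$. This uses the self-adjointness of $C$ (Fuglede-type reasoning is not even needed) and the standard fact that the commutant of a self-adjoint operator contains its spectral projections. Everything else is a direct compression argument.
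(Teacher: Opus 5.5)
Your proof is correct and follows essentially the same route as the paper. Both arguments cut down by a spectral projection of $C$ over a negative part of $\sigma(C)$, note that this projection reduces ${\bf T}$ because it commutes with each $T_j$, and then apply Lemma~\ref{comnotbb} to the compressed tuple. Your choice of $\varepsilon>0$ with $E_C\big((-\infty,-\varepsilon]\big)\neq 0$ is slightly more careful than the paper's $\mathcal M=E([-\|C\|,\lambda_0])\mathcal H$ for $\lambda_0\in\sigma(C)$. The paper leaves implicit that $\mathcal M\neq\{0\}$, and that subspace can vanish when $\lambda_0=\min\sigma(C)$ is not an eigenvalue.
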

\begin{proof} There is no loss of generality in assuming that $\mathcal H \neq \{0\}$.
By the spectral theorem for normal operators (see \cite[Theorem 1.12]{RR1973}), there exists a spectral measure $E$ supported on $\sigma(C)$ such that
	$$C=\int_{\sigma(C)} \lambda\, \D E(\lambda).$$ 
Since 
$CT_j=T_jC,$ by the spectral theorem (see \cite[Theorem 1.16]{RR1973}), $T_j$ commutes with $E(\sigma)$ for every Borel subset $\sigma$ of $\sigma(C)$ and every $j=1, \ldots, d$. In particular, $E(\sigma)\mathcal H$ reduces ${\bf T}$ for every Borel subset $\sigma$ of $\sigma(C)$. 
Let $\lambda_0 \in \sigma(C) \cap [-\|C\|, 0]$ and let $\mathcal M= E([-\|C\|, \lambda_0])\mathcal H$. Then, for every $x \in \mathcal M,$ 
\begin{align*}
\langle [{\bf T}, {\bf T}^*]x, \, x\rangle & \Le \langle Cx, x\rangle \\
&= \int_{[-\|C\|, \lambda_0]}
\lambda\, \D \langle E(\lambda)x,x\rangle \\
& \Le \lambda_0 \|x\|^{2}.
\end{align*}
Since $\mathcal M$ reduces ${\bf T}$, this implies that
$[{\bf T}|_{\mathcal M}, ({\bf T}|_{\mathcal M})^*]
\Le \lambda_0 I|_{\mathcal M},$ or equivalently, $$[({\bf T}|_{\mathcal M})^*, {\bf T}|_{\mathcal M}]
\Ge -\lambda_0 I|_{\mathcal M}.$$ This, combined with Lemma \ref{comnotbb}, implies that $\lambda_0 =0$. Thus, $\sigma(C) \cap [-\|C\|, 0] \subseteq \{0\},$ and hence $\sigma(C) \subseteq [0, \|C\|].$ Since $C$ is a self-adjoint operator, $C \Ge 0$. 
\end{proof}

\begin{proof}[Proof of Theorem~\ref{main-3}] 	 
Consider the $(d-1)$-tuple $\widehat{\bf T}_j$ obtained from ${\bf T}$ by deleting $T_j$, and let $C=[T^*_j, T_j],$ $j=1, \ldots, d.$ By \eqref{double-commutator}, we have $CT_k = T_k C$ for every $1 \Le k \neq j \Le d$. Hence, by Lemma~\ref{gen-fact}, each of the operators $T_1, \ldots, T_d$ is hyponormal. To see the remaining half, note that by \eqref{d-comm-formula},
\begin{equation} \label{application-d-comm}
 [T_j, T^*_k]=0 \Longrightarrow \big[T_j, [T^*_k, T_k]\big] = 0
\quad 1 \Le j \neq k \Le d. 
\end{equation}
Finally, a doubly commuting $d$-tuple consisting of hyponormal operators is hyponormal, since $[\![{\bf T}^*, {\bf T}]\!]$ is the diagonal matrix whose diagonal entries are $[T^*_j, T_j],$ $j=1, \ldots, d$. This, together with \eqref{application-d-comm}, 
proves the ``in particular” statement.
\end{proof}


\section{Consequences of the main results \label{Sect3}}

In this section, we present several consequences of the main results and their proofs given in Section~\ref{Sect2}. The first corollary of Theorem~\ref{main-2} can also be deduced from Theorem~\ref{main-4}.
\begin{proposition} \label{main-1}
Any sum-hyponormal $d$-tuple on a finite dimensional complex Hilbert space is normal.
\end{proposition}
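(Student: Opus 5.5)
The plan is to reduce the statement to Theorem~\ref{main-2} via a trace argument. Let ${\bf T}$ be a sum-hyponormal $d$-tuple on a finite-dimensional space $\mathcal H$, and set $C:=[{\bf T}^*,{\bf T}]=\sum_{j=1}^d (T_j^*T_j - T_jT_j^*)$.

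\emph{Step 1: ${\bf T}$ is sum-normal.} Every operator on $\mathcal H$ is trace class, and $\operatorname{trace}(T_j^*T_j)=\operatorname{trace}(T_jT_j^*)$. Hence $\operatorname{trace}(C)=0$. Since $C\Ge 0$ by hypothesis, a positive operator with zero trace vanishes. Therefore $C=0$, that is, ${\bf T}$ is sum-normal. This is the content referred to as Remark~\ref{cyclicity-trace}.

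\emph{Step 2: ${\bf T}$ is normal.} Apply Theorem~\ref{main-2} with the invariant subspace $\mathcal M=\{0\}$, so that $\mathcal H\ominus\mathcal M=\mathcal H$ is finite dimensional. The theorem then gives that ${\bf T}|_{\mathcal H}={\bf T}$ is normal, which completes the argument.

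\emph{Alternative route.} One could instead invoke Theorem~\ref{main-4}, since finite-rank operators are compact. In finite dimensions the joint eigenspaces span $\mathcal H$, which follows from \cite[Theorem 3.1]{B1996} or from a direct triangularization. Then $\mathcal M=\mathcal H$ and ${\bf T}={\bf N}$ is normal. One can also argue directly: a quasinilpotent tuple on a finite-dimensional space consists of nilpotent operators, so Theorem~\ref{nilpotent-thm} kills the second summand.

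There is essentially no obstacle here. The only point needing care is the trace-cyclicity step, which is trivial in finite dimensions.
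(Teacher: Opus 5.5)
Your main argument is correct. It is exactly the derivation the paper sketches right after the statement of Theorem~\ref{main-2}: Remark~\ref{cyclicity-trace} upgrades sum-hyponormality to sum-normality, and Theorem~\ref{main-2} with $\mathcal M=\{0\}$ then gives normality.

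The proof written under Proposition~\ref{main-1} goes the other way, through the first part of Theorem~\ref{main-4}. By Lemma~\ref{eigen-adjoint}(i), $\ker({\bf T}-\lambda)\subseteq\ker({\bf T}^*-\bar\lambda)$. Hence the joint eigenspaces are mutually orthogonal, and $\mathcal M=\bigvee_{\lambda\in\sigma_p({\bf T})}\ker({\bf T}-\lambda)$ reduces ${\bf T}$ to a normal tuple. Then $\mathcal H\ominus\mathcal M$ is invariant. If it were nonzero, it would contain a common eigenvector of the commuting matrices ${\bf T}|_{\mathcal H\ominus\mathcal M}$. That vector is an eigenvector of ${\bf T}$, so it lies in $\mathcal M$, a contradiction.

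The contrast is instructive. Theorem~\ref{main-2} peels off common eigenvectors of ${\bf T}^*$. Pinning those down needs the reverse inequality $\langle[{\bf T}^*,{\bf T}]v,v\rangle\Le 0$, which is why your trace step is required. The paper peels off eigenvectors of ${\bf T}$, for which sum-hyponormality alone suffices. So the paper needs no trace argument, and the same mechanism produces the normal summand in Theorem~\ref{main-4}. Your route, in exchange, is a two-line reduction to an already proved theorem.

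In your alternative route, the sentence ``in finite dimensions the joint eigenspaces span $\mathcal H$'' is false as a general fact. A $2\times 2$ nilpotent Jordan block has only a one-dimensional eigenspace. Neither \cite[Theorem~3.1]{B1996} (which only says that nonzero points of the Taylor spectrum are eigenvalues) nor triangularization yields spanning. What gives $\mathcal M=\mathcal H$ is the invariance argument above, where triangularization (or \cite{CT1979}) supplies just one common eigenvector.

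Your third variant is fine. By the projection property each $S_j$ has spectrum $\{0\}$, so it is nilpotent, and Theorem~\ref{nilpotent-thm} gives ${\bf S}={\bf 0}$.
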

\begin{proof} Let ${\bf T}$ be a sum-hyponormal $d$-tuple on a finite-dimensional complex Hilbert space $\mathcal H$. It is well known that if 
$A_1, \ldots, A_d \in {\mathcal B}(\mathcal H)$ are pairwise commuting operators, then there exists a nonzero vector $v \in \mathcal H$ and scalars $\lambda_1,\ldots,\lambda_d \in \mathbb{C}$ such that
$A_j v = \lambda_j v,$ $j=1, \ldots, d$ (see \cite{CT1979}).
By Theorem~\ref{main-4}, the closed subspace $\mathcal M$ of $\mathcal H$ defined by \eqref{ortho-M} reduces ${\bf T}$ to a normal $d$-tuple. Hence, by the aforementioned fact, $\mathcal H=\mathcal M,$ and consequently, ${\bf T}$ is normal. 
\end{proof}
\begin{remark} \label{cyclicity-trace}
Let ${\bf T}$ be a sum-hyponormal tuple of trace-class operators.
The cyclicity of the trace (see \cite[Theorem 3.6.7]{S2015}) shows that the trace of the positive operator $[{\bf T}^*, {\bf T}]$ is zero. Hence, ${\bf T}$ is sum-normal. \hfill $\diamondsuit$
\end{remark}

We now present an example of a non-normaloid, sum-hyponormal unilateral weighted $2$-shift showing that Proposition~\ref{main-1} no longer holds when $\mathcal H$ is infinite-dimensional.
\begin{example} \label{q-hypo} 
Let $\mathcal H$ be a separable complex Hilbert space with orthonormal basis 
$\{e_\alpha : \alpha \in \mathbb Z^2_+\}$. Consider the unilateral weighted $2$-shift ${\bf W}=(W_1, W_2)$, with weight multisequence $\big\{\omega^{(j)}_{\alpha} : \alpha \in \mathbb Z^2_+, ~j=1, 2\big\}$, given by 
\begin{align*}
\omega^{(1)}_{(0,1)}
&=\omega^{(2)}_{(1,0)}=\sqrt{\frac{1}{3}}, \quad
\omega^{(1)}_{(1,0)}
=\omega^{(2)}_{(0,1)}=\sqrt{\frac{1}{2}},\\
&\left.
\begin{aligned}
\omega^{(j)}_{(0,0)}
&=\sqrt{\frac{2}{3}}, \quad
\omega^{(j)}_{(1,1)}
=\sqrt{\frac{1}{2}},\\
\omega^{(j)}_{\alpha}
&=\sqrt{\frac{\alpha_j+1}{\alpha_1+\alpha_2+1}},
\quad
\alpha\in\mathbb Z_+^2\setminus\{(1,1)\},\ \alpha_1+\alpha_2 \Ge 2,
\end{aligned}
\right\},
\quad j=1,2.
\end{align*}
Recall that 
\begin{equation*}
W_j e_\alpha = w^{(j)}_\alpha e_{\alpha + \varepsilon_j}, \quad \alpha \in \mathbb Z^2_+, ~j=1, 2.
\end{equation*}
Since the weight multisequence
$\big\{{\omega^{(j)}_\alpha} : j
=1, 2, \,\alpha \in  \mathbb Z^2_+ \big\}$ is bounded by $1$, each of the operators $W_1$ and $W_2$ is bounded.
Also, $W_1W_2=W_2W_1$, since $$\omega^{(1)}_{(\alpha_1, \alpha_2)}\omega^{(2)}_{(\alpha_1+1, \alpha_2)}=\omega^{(2)}_{(\alpha_1, \alpha_2)}\omega^{(1)}_{(\alpha_1, \alpha_2+1)}, \quad (\alpha_1, \alpha_2) \in \mathbb Z^2_+.$$ 
For $j=1, 2,$ set $\omega^{(j)}_{\alpha}=0$ whenever $\alpha_1 < 0$ or $\alpha_2 < 0$. Then
\begin{align*}
\|W_je_\alpha\|^2- \|W^*_je_\alpha\|^2=(\omega^{(j)}_\alpha)^2- (\omega^{(j)}_{\alpha- \epsilon(j)})^2, \quad  \alpha \in \mathbb Z^2_+,\, j=1, 2.
\end{align*}  
Consequently, for every $\alpha = (\alpha_1, \alpha_2) \in\mathbb Z^2_+$, 
\begin{align*}
\sum_{j=1}^2 \Big(\|W_je_\alpha\|^2- \|W^*_je_\alpha\|^2\Big)
=\begin{cases} 4/3, & \mbox{if}~\alpha=(0, 0),\\
1/6, & \mbox{if}~\alpha=(1, 0), (0, 1), \\
5/6, & \mbox{if}~\alpha=(2, 0), (0, 2),\\
5/12, & \mbox{if}~\alpha=(2, 1), (1, 2),\\
\frac{1}{\alpha_1+\alpha_2+1}, & \mbox{otherwise}.
\end{cases}
\end{align*}
On the other hand, 
\begin{align*}
\|W_1e_{(1, 0)}\|^2- \|W^*_1e_{(1, 0)}\|^2=
\|W_2e_{(0, 1)}\|^2- \|W^*_2e_{(0, 1)}\|^2 =-\frac{1}{6}.
\end{align*}
Thus, ${\bf W}$ is sum-hyponormal, whereas neither $W_1$ nor $W_2$ is hyponormal.

We now show that ${\bf W}$ is not normaloid. To this end, let $\mathscr{H}_\kappa$ denote the reproducing kernel Hilbert space of holomorphic functions on the unit ball $\mathbb B^2 \subseteq \mathbb C^2$ with reproducing kernel
\begin{equation*}
\kappa(z,w) = \frac{3}{1-\langle z,w\rangle}- 2-\frac{3}{2}\langle z,w\rangle-\frac{3}{2}z_1z_2\bar{w}_1 \bar{w}_2, ~~ z=(z_1,z_2), \, w=(w_1,w_2) \in \mathbb B^2,
\end{equation*}
where $\langle z,w\rangle = z_1\bar{w}_1 + z_2\bar{w}_2$.
Consider the map $U$ defined by $U(e_{\alpha})=\frac{z^{\alpha}}{\|z^{\alpha}\|},$ $\alpha \in \mathbb Z^2_+$. 
By \cite[Theorem~4.14]{PR2016}, $\{z^{\alpha}\}_{\alpha \in \mathbb Z^2_+}$ forms an orthogonal basis of $\mathscr H_\kappa$. This, combined with \cite[Exercise~3.7 and Proposition 4.11]{PR2016}, yields    
\[\|z^{\alpha}\|^2 = \begin{cases}
1, & \mbox{if}~\alpha=(0,0),\\ 
\frac{2}{3}, & \mbox{if}~\alpha=(1,0),(0,1),\\ 
\frac{2}{9}, & \mbox{if}~\alpha =(1,1),\\
\frac{1}{3} \frac{\alpha_1!\alpha_2!}{(\alpha_1+\alpha_2)|!},& \mbox{if}~\alpha=(\alpha_1, \alpha_2) \in \mathbb Z_+^2 \setminus \{(1,1)\}, 
~\alpha_1+\alpha_2 \Ge 2.
\end{cases}\]
It follows that $U$ extends to a unitary map from $\mathcal H$ onto $\mathscr H_\kappa$ satisfying $$UW_j = \mathscr M_{z_j}U,\quad j=1, 2,$$  
where $\mathscr M_{z} =(\mathscr M_{z_1},\mathscr M_{z_2})$ denotes the multiplication $2$-tuple on $\mathscr{H}_\kappa$.
Thus, $\mathscr M_z$ is unitarily equivalent to the unilateral weighted $2$-shift ${\bf W}$.
A straightforward application of the spectral radius formula (see  \cite[Theorem~1]{CZ1992} and \cite[Theorem~1]{MS1992}) yields $r({\bf W})=r(\mathscr M_z)=1.$ On the other hand, $$\|W_1e_{(0, 0)}\|^2 + \|W_2e_{(0, 0)}\|^2=\frac{4}{3}.$$ Hence, ${\bf W}$ is not normaloid.
\hfill $\diamondsuit$
\end{example}

The following application of Theorem~\ref{main-4} provides sufficient conditions for a sum-hyponormal $2$-tuple of compact operators to be normal.
\begin{proposition} \label{mixed-cohypo-coro}
Let ${\bf T}=(T_1,T_2)$ be a sum-hyponormal $2$-tuple consisting of compact operators. 
Suppose that one of the following conditions holds$:$
\begin{enumerate}
\item[(a)] For some $1 \Le j \neq k \Le 2,$ 
\begin{equation}
\label{mixed-hypo}
T^*_j[T^*_k, T_k]T_j \Le 0.
\end{equation} 
\item[(b)] $\Re(T_1^*T_1T_2^*T_2)=0$.
\end{enumerate}
Then ${\bf T}$ is normal. 
\end{proposition}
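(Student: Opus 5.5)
By Theorem~\ref{main-4}, we may write ${\bf T}={\bf N}\oplus{\bf S}$ on $\mathcal M\oplus(\mathcal H\ominus\mathcal M)$, where ${\bf N}$ is normal and ${\bf S}=(S_1,S_2)$ is a completely nonnormal, quasinilpotent, sum-hyponormal pair of compact operators. It therefore suffices to show that ${\bf S}={\bf 0}$. Since $\mathcal H\ominus\mathcal M$ reduces ${\bf T}$, every operator built from $T_1,T_2,T_1^*,T_2^*$ is block diagonal. Consequently, conditions (a) and (b) pass to ${\bf S}$ verbatim, and we may assume from the outset that ${\bf T}$ itself is quasinilpotent. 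The plan in both cases is to derive an algebraic relation from the hypothesis, combine it with sum-hyponormality, and then use the cancellation Lemma~\ref{prod-lem} (or Theorem~\ref{nilpotent-thm}) together with quasinilpotence to force ${\bf T}={\bf 0}$.

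\emph{Case} (b). Put $P=T_1^*T_1\Ge0$ and $Q=T_2^*T_2\Ge0$, so that $PQ+QP=0$. The first step is the standard observation that anticommuting positive operators have zero product. Indeed, $P^2Q=-PQP=QP^2$, so $Q$ commutes with $P^2$ and hence with $P=(P^2)^{1/2}$. Then $2PQ=0$. From $T_1^*T_1T_2^*T_2=0$ we obtain $T_1T_2^*T_2=0$ (multiply by $T_2^*T_2$ on the right and take norms), and then $T_2^*T_2T_1^*=0$, hence $T_2T_1^*=0$ and $T_1T_2^*=0$. Thus ${\bf T}$ is doubly commuting. Theorem~\ref{main-3} then shows that $T_1$ and $T_2$ are hyponormal. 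A compact hyponormal operator is normal, and a quasinilpotent normal operator is $0$, so ${\bf T}={\bf 0}$.

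\emph{Case} (a). Take $j=1$, $k=2$; the other choice follows by symmetry (Remark~\ref{rem-1}(c)). The hypothesis says $\|T_2T_1x\|\Le\|T_2^*T_1x\|$ for all $x$. Sum-hyponormality gives $\|T_1^*y\|^2+\|T_2^*y\|^2\Le\|T_1y\|^2+\|T_2y\|^2$. Applying this with $y=T_1x$ yields
\[
\|T_1^*T_1x\|^2\Le \|T_1^2x\|^2+\|T_2T_1x\|^2-\|T_2^*T_1x\|^2\Le\|T_1^2x\|^2,
\]
that is, $T_1^*[T_1^*,T_1]T_1\Ge0$. In other words, $T_1$ is hyponormal on the invariant subspace $\overline{\operatorname{ran}T_1}$. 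Hence $T_1|_{\overline{\operatorname{ran}T_1}}$ is a compact quasinilpotent hyponormal operator, and therefore equals $0$. This gives $T_1^2=0$, so $T_1$ is nilpotent.

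The remaining step, which I expect to be the main obstacle, is to get $T_1=0$ and then deal with $T_2$. First I would apply Lemma~\ref{prod-lem} with $\alpha=\varepsilon_1$. Since $T_1^2=0$, this requires $T_2T_1=0$. To obtain it, I would restrict to $\mathcal K=\overline{\operatorname{ran}T_1}\subseteq\ker T_1$, which is invariant under ${\bf T}$. There ${\bf T}|_{\mathcal K}=(0,T_2|_{\mathcal K})$ is sum-hyponormal, so $T_2|_{\mathcal K}$ is compact, quasinilpotent and hyponormal, hence zero. Lemma~\ref{prod-lem} then gives $T_1=0$. Finally, ${\bf T}=(0,T_2)$ being sum-hyponormal means that $T_2$ is hyponormal, compact and quasinilpotent, so $T_2=0$. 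The delicate point to check carefully is that each restriction used really inherits sum-hyponormality (via Remark~\ref{rem-1}(b)) and the hypotheses invoked.
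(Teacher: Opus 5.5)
Case (b) has a genuine gap at the sentence ``Thus ${\bf T}$ is doubly commuting.'' The two relations you obtain, $T_2T_1^*=0$ and $T_1T_2^*=0$, are adjoints of one another, so you have only one relation. Double commutativity requires $[T_1^*,T_2]=T_1^*T_2-T_2T_1^*=0$, so you also need $T_1^*T_2=0$, i.e.\ $\operatorname{ran}T_1\perp\operatorname{ran}T_2$. This does not follow from $PQ=0$ and $T_1T_2=T_2T_1$. On $\mathbb C^3$ with orthonormal basis $e_1,e_2,e_3$, let $T_1e_1=e_3$ and $T_2e_2=e_3$, with both operators annihilating the remaining basis vectors. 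Then $T_1T_2=T_2T_1=0$, $T_1^*T_1T_2^*T_2=0$ and $T_1T_2^*=0$, yet $T_1^*T_2e_2=e_1\neq0$. This pair is not sum-hyponormal, and that is the point: your derivation of double commutativity never uses sum-hyponormality, and without it the claim is false. So Theorem~\ref{main-3} cannot yet be invoked.

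The gap can be closed using sum-hyponormality.
\begin{itemize}
\item Put $\mathcal H_j=\overline{\operatorname{ran}T_j^*}=(\ker T_j)^\perp$. Your identity $PQ=0$ gives $\mathcal H_2\subseteq\ker T_1=\mathcal H_1^\perp$. Hence $\mathcal H=\mathcal H_0\oplus\mathcal H_1\oplus\mathcal H_2$, where $\mathcal H_0=\ker T_1\cap\ker T_2$.
\item Lemma~\ref{eigen-adjoint}(i) with $\lambda=0$ gives $\mathcal H_0\subseteq\ker T_1^*\cap\ker T_2^*$, that is, $\operatorname{ran}T_j\perp\mathcal H_0$.
\item For $x\in\mathcal H_1\subseteq\ker T_2$ we have $T_2T_1x=T_1T_2x=0$, so $T_1x\in\ker T_2\ominus\mathcal H_0=\mathcal H_1$. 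Since $T_1$ vanishes on $\mathcal H_1^\perp$, this gives $\operatorname{ran}T_1\subseteq\mathcal H_1$. Symmetrically, $\operatorname{ran}T_2\subseteq\mathcal H_2$.
\end{itemize}
Hence $T_1^*T_2=0$, the pair is doubly commuting, and the rest of your argument goes through.

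The paper avoids this detour. It shows directly that $\varphi^2_{\bf S}(I)-\varphi_{\bf S}(I)^2\Ge (S_2^*S_1)^*S_2^*S_1+(S_1^*S_2)^*S_1^*S_2-2\Re(S_1^*S_1S_2^*S_2)\Ge0$, then applies \eqref{special-appendix} to get $\varphi_{\bf S}(I)=0$.

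Your case (a) is correct. Your inequality $\|T_1^*T_1x\|\Le\|T_1^2x\|$ is exactly the paper's $(S_1^*S_1)^2\Le S_1^{*2}S_1^2$, and the paper finishes with \eqref{special-appendix}. Your passage to $\overline{\operatorname{ran}T_1}$ is valid but longer than needed. Once $T_1^2=0$, that same inequality already forces $T_1^*T_1=0$, so the second restriction and Lemma~\ref{prod-lem} are superfluous.
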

\begin{proof} 
Let $\mathcal M$ be as defined in \eqref{ortho-M}. By Theorem~\ref{main-4}, ${\bf T}$ admits the decomposition
\begin{equation}
\label{decom-coro}
{\bf T}={\bf N} \oplus {\bf S}~\mbox{on~}\mathcal H = \mathcal M \oplus (\mathcal H \ominus \mathcal M),
\end{equation}
where ${\bf N}$ is a normal $2$-tuple on $\mathcal M$ and ${\bf S}=(S_1, S_2)$ is a quasinilpotent sum-hyponormal $2$-tuple on $\mathcal H \ominus \mathcal M$. 

(a) Since sum-hyponormality is invariant under permutations, we may assume that \eqref{mixed-hypo} holds for $j=1$ and $k=2$.
Note that $S^*_1[{\bf S}^*, {\bf S}]S_1 \Ge 0,$ and hence $$(S^*_1S_1)^2 + S^*_1S_2S^*_2S_1 \Le S^{*2}_1S^2_1 + S^*_1S^*_2S_2S_1.$$
Since ${\bf S}$ also satisfies \eqref{mixed-hypo}, it follows that $$(S^*_1S_1)^2 - S^{*2}_1S^2_1 \Le S^*_1[S^*_2, S_2]S_1 \Le 0.$$ Hence, by \eqref{special-appendix}, $\|S_1\|=r(S_1)$. Since $\sigma({\bf S})=\{0\},$ 
combining with the projection property for the Taylor spectrum (see \cite[Lemma~3.1]{T1970}), this implies that 
$S_1=0$. Once again, by the sum-hyponormality of ${\bf S},$ we conclude that $S_2$ is hyponormal, and a similar argument shows that $S_2=0$. Hence, by \eqref{decom-coro}, ${\bf T}={\bf N}\oplus {\bf 0}$, which completes the proof in this case.

(b) 
Since $\Re(T_1^*T_1T_2^*T_2)=0$,
we obtain 
\begin{equation}
\label{tag-1}
\Re(S_1^*S_1S_2^*S_2)=0.
\end{equation}
Let $\varphi_{\bf S}$ be as defined in \eqref{varphi-T}.
Then, by the sum-hyponormality of ${\bf S}$,
\begin{align*}
\varphi^2_{\bf S}(I)
&=
S_1^*(S_1^*S_1+S_2^*S_2)S_1
+
S_2^*(S_1^*S_1+S_2^*S_2)S_2
\\
&\Ge
S_1^*(S_1S_1^*+S_2S_2^*)S_1
+
S_2^*(S_1S_1^*+S_2S_2^*)S_2.
\end{align*}
It follows that 
\begin{align*}
\varphi^2_{\bf S}(I)- \varphi_{\bf S}(I)^2
&\Ge
(S_2^*S_1)^*S_2^*S_1
+
(S_1^*S_2)^*S_1^*S_2
-
2\Re(S_1^*S_1S_2^*S_2).
\end{align*}
By \eqref{tag-1},
$\varphi_{\bf S}(I)^2 \Le \varphi^2_{\bf S}(I)$.
Once again, by \eqref{special-appendix},
$r({\bf S})
=
\|\varphi_{\bf S}(I)\|^{1/2}.$
Since ${\bf S}$ is quasinilpotent,
$\varphi_{\bf S}(I)=0$.
Therefore,
$S_1^*S_1+S_2^*S_2=0,$
which implies
${\bf S}={\bf 0}$.
Hence, by \eqref{decom-coro}, ${\bf T}$ is normal.
\end{proof}

Note that \eqref{Putnam-d-var} is 
nontrivially optimal. Indeed, let $T \in \mathcal B(\mathcal H)$ be a hyponormal operator satisfying $\pi\|[T^*, T]\|=\mathrm{m}_2(\sigma(T))$ (for example, the unilateral unweighted shift; see \cite[Theorem~5]{P1972} for a model of such operators having real parts with simple spectra). Then the commuting pair ${\bf T}=(T, T)$ satisfies \eqref{Im-commute-new}. Moreover,
\begin{align} \notag
\pi\|[{\bf T}^*, {\bf T}]\| &= 2\pi \|[T^*, T]\|
=|1+i|^2\mathrm{m}_2\big(\sigma\big(T)\big)\big) \\ \label{optimal}
&=\mathrm{m}_2\big((1+i)\sigma\big(T)\big)\big)=\mathrm{m}_2\big(\sigma\big(T+iT)\big)\big).
\end{align}   
Although \eqref{Putnam-d-var} applies to the broad class of sum-hyponormal tuples with commuting imaginary parts, the following example demonstrates that it does not always provide the best possible estimate. 
\begin{example} Let $\mathcal H$ be a nonzero complex Hilbert space and let $A, B \in \mathcal B(\mathcal H)$ be non-normal hyponormal operators. Let $T_1 = A \otimes I$ and $T_2=I \otimes B$, and 
consider the doubly commuting pair ${\bf T}=(T_1, T_2)$ of hyponormal operators. By \cite[Theorem~2.2]{CV1978}, $\sigma({\bf T})=\sigma(T_1)\times \sigma(T_2).$
Then $\sigma(T_1+iT_2)=\sigma(T_1)+\sigma(iT_2)$. Indeed,
$$\{z_1+iz_2 : (z_1, z_2)\in \sigma({\bf T})\}=\{z_1+iz_2 : z_1 \in \sigma(T_1), z_2 \in \sigma(T_2)\}.$$
By the Brunn--Minkowski inequality (see \cite[ Theorem~12.2.2]{Ma2002}),
\begin{equation}
\label{classical-Putnam}
\mathrm{m}_2\big(\sigma(T_1+iT_2)\big)\Ge \Big(  \mathrm{m}_2\big(\sigma(T_1)\big)^{1/2}+\mathrm{m}_2\big(\sigma(T_2)\big)^{1/2}\Big)^{2}.
\end{equation}
Since $\|[{\bf T}^*, {\bf T}]\| \Le \|[T^*_1, T_1]\|+\|[T^*_2, T_2]\|$, the Putnam's inequality yields
\begin{align*}
\|[{\bf T}^*, {\bf T}]\| & \Le \, \frac{1}{\pi}\Big(\mathrm{m}_2(\sigma(T_1))+\mathrm{m}_2(\sigma(T_2))\big) \\
&\Le \, \frac{1}{\pi}\Big(\mathrm{m}_2\big(\sigma(T_1)\big)^{1/2}+\mathrm{m}_2\big(\sigma(T_2)\big)^{1/2}\Big)^{2}\\
&\stackrel{\mathclap{\eqref{classical-Putnam}}}{\Le}  \, \frac{1}{\pi}\mathrm{m}_2\big(\sigma(T_1+iT_2)\big),
\end{align*}
where the second inequality is strict because both $T_1, T_2$ are non-normal.
\hfill $\diamondsuit$
\end{example}

We now present two applications of Theorem~\ref{new-thm-Putnam} (cf. \cite[Theorem~3]{DY1992}). 
\begin{proposition} 
Let ${\bf T}=(T_1, \ldots, T_d)$ be a sum-hyponormal $d$-tuple on a complex Hilbert space $\mathcal H$ satisfying
\eqref{Im-commute-new}.
If $\sigma({\bf T})$ $($resp. $\sigma_e({\bf T}))$ is contained in 
\begin{align*}
\begin{cases} \{(z_1, \ldots, z_d) \in \mathbb C^d: z_{2j-1}, iz_{2j}  \in \mathbb R,~j=1,  \ldots, (d-1)/2, ~ z_d \in \mathbb R\} & \mbox{if}~d ~\mbox{is odd},\\ 
\{(z_1, \ldots, z_d) \in \mathbb C^d: z_{2j-1}, iz_{2j}  \in \mathbb R,~j=1,  \ldots, d/2\} & \mbox{if}~d ~\mbox{is even},
\end{cases}
\end{align*}
then ${\bf T}$ is normal $($resp. essentially normal$)$.
\end{proposition}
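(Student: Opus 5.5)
The plan is to apply Theorem~\ref{new-thm-Putnam} with $E=\{0\}$ after noting that the spectral condition forces $s(\sigma({\bf T}))$ into a real line. For the essential-spectrum part, we first pass to the Calkin algebra via the representation $\pi\circ q$ of Remark~\ref{scalar-cpt}.

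\textbf{The Taylor spectrum case.} Let $z$ lie in the indicated set. Then $z_{2j-1}\in\mathbb R$ and $iz_{2j}\in\mathbb R$ for each pair. Hence $s(z)=\sum_j (z_{2j-1}+iz_{2j})+(d \bmod 2)z_d$ is a sum of real numbers, so $s(z)\in\mathbb R$. Consequently $\sigma({\bf T})\subseteq s^{-1}(\mathbb R)$.

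Since $\mathbb R$ has planar Lebesgue measure zero, the ``in particular'' part of Theorem~\ref{new-thm-Putnam} applies with $E=\mathbb R$. This gives that ${\bf T}$ is normal. Equivalently, by the spectral mapping theorem $\sigma(s({\bf T}))\subseteq\mathbb R$, and \eqref{Putnam-d-var} then yields $[{\bf T}^*,{\bf T}]=0$, after which Lemma~\ref{coro-Putnam} gives normality (after padding with zeros as in \eqref{2k-to-general}).

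\textbf{The essential spectrum case.} Let ${\bf A}=(\pi\circ q(T_1),\ldots,\pi\circ q(T_d))$ on $\mathcal K$. Because $\pi\circ q$ is a unital $*$-homomorphism, ${\bf A}$ is commuting and satisfies $[{\bf A}^*,{\bf A}]=\pi\circ q([{\bf T}^*,{\bf T}])\Ge 0$, so ${\bf A}$ is sum-hyponormal. It also satisfies \eqref{Im-commute-new}, since $\Im$ and commutators are preserved by $*$-homomorphisms.

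The key step, which I expect to be the main obstacle, is the inclusion $\sigma({\bf A})\subseteq\sigma_e({\bf T})$, or at least $\sigma(s({\bf A}))\subseteq\mathbb R$. I would argue it as follows. The essential spectrum is the Taylor spectrum of the image tuple in the Calkin algebra $\mathcal B(\mathcal H)/\mathcal C(\mathcal H)$, taken in the sense of the joint spectrum in a commutative subalgebra or the Fredholm Koszul complex. Under the injective representation $\pi$, the Taylor spectrum of ${\bf A}$ on $\mathcal K$ is contained in that joint spectrum.

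If this inclusion proves delicate, I would bypass it by a single-variable route. By the spectral mapping theorem for the essential spectrum, $\sigma_e(s({\bf T}))=s(\sigma_e({\bf T}))\subseteq\mathbb R$. The essential spectrum of the single operator $s({\bf T})$ equals the spectrum of $q(s({\bf T}))$ in the Calkin algebra. This in turn equals $\sigma(s({\bf A}))$, because spectra are preserved by the injective $*$-representation $\pi$ (spectral permanence for $C^*$-algebras). Hence $\sigma(s({\bf A}))\subseteq\mathbb R$.

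Finally, \eqref{Putnam-d-var}, applied to ${\bf A}$ with the identity permutation, gives $\|[{\bf A}^*,{\bf A}]\|\Le \frac1\pi \mathrm{m}_2(\sigma(s({\bf A})))=0$. So ${\bf A}$ is sum-normal, and by Lemma~\ref{coro-Putnam} (after padding to a $2^k$-tuple with zeros) ${\bf A}$ is normal. Injectivity of $\pi$ then gives $q([T_j^*,T_j])=0$, that is, $[T_j^*,T_j]\in\mathcal C(\mathcal H)$ for each $j$. Hence ${\bf T}$ is essentially normal.
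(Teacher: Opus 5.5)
Your proof is correct. The Taylor-spectrum half is exactly the paper's argument: take $E=\mathbb R$ in Theorem~\ref{new-thm-Putnam}. The ``$E=\{0\}$'' in your opening sentence is a slip.

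For the essential-spectrum half, you and the paper both pass to ${\bf A}=(\pi\circ q(T_1),\ldots,\pi\circ q(T_d))$. The difference is how the spectral hypothesis is transferred to ${\bf A}$.

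\emph{The paper's route.} It invokes Curto's multivariable Atkinson theorem \cite[Theorem~2]{Cu1981} to get $\sigma({\bf A})=\sigma_e({\bf T})$. It then simply reruns the first part on ${\bf A}$.

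\emph{Your first suggestion.} Your justification of $\sigma({\bf A})\subseteq\sigma_e({\bf T})$ is only a sketch. Its real content is precisely Curto's theorem. Fredholmness of the Koszul complex of ${\bf T}-\lambda$ is equivalent to invertibility modulo compacts of an operator matrix whose entries are the $T_j-\lambda_j$ and their adjoints. Applied entrywise, $\pi\circ q$ carries this matrix to the corresponding one for ${\bf A}-\lambda$.

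\emph{Your bypass.} This is a genuinely different route, because it never touches the joint spectrum of ${\bf A}$. Since \eqref{Putnam-d-var} only involves the single operator $s({\bf A})$, it suffices to know $\sigma(s({\bf A}))\subseteq\mathbb R$. You obtain this from three ingredients: single-operator Atkinson, spectral permanence for $C^*$-algebras, and the spectral mapping theorem $\sigma_e(s({\bf T}))=s(\sigma_e({\bf T}))$ for the Taylor essential spectrum.

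That last fact is a real multivariable input and should be cited explicitly. It is of comparable depth to Curto's result. Indeed, it follows from Curto's theorem combined with Taylor's spectral mapping theorem applied to ${\bf A}$. If you justify it that way, your route becomes the paper's route plus a detour.

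\emph{What each buys.} The paper's approach yields the stronger identification of joint spectra and lets part one be reused verbatim. Yours isolates the one multivariable fact the Putnam inequality actually needs. Your concluding steps are sum-normality of ${\bf A}$, normality via Lemma~\ref{coro-Putnam} after zero-padding, and injectivity of $\pi$. These are exactly what the paper's ``apply the first part to ${\bf S}$'' amounts to.
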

\begin{proof} The first assertion follows by taking $E=\mathbb R$ in Theorem~\ref{new-thm-Putnam}. To see the remaining assertion, consider the $d$-tuple
$
\mathbf{S}:=(\pi \circ q(T_1), \ldots, \pi \circ q(T_d))$ (see Remark~\ref{scalar-cpt}),
and note that by the multi-variable Atkinson theorem (see \cite[Theorem~2]{Cu1981}), $\sigma(\mathbf{S})=\sigma_e(\mathbf{T}).$ 
One may now apply the first part to $\mathbf{S}$ to complete the proof.   
\end{proof}

The following application of Theorem~\ref{new-thm-Putnam} answers Question~\ref{Q1}(iii).
\begin{proposition} 
\label{main-3-coro}
Let ${\bf T}$ be a commuting $d$-tuple on a complex Hilbert space $\mathcal H$ satisfying \eqref{Im-commute-new}. Suppose that any one of the following conditions holds$:$
\begin{enumerate}
\item[(a)] ${\bf T}$ is a sum-normal $d$-tuple.
\item[(b)] ${\bf T}$ is a sum-hyponormal $d$-tuple such that $\Im(T_j) \in \mathcal C(\mathcal H)$ for all $1 \Le j \Le d.$
\end{enumerate}
Then ${\bf T}$ is normal. 
\end{proposition}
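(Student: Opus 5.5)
For part (a), I would reduce to Lemma~\ref{coro-Putnam} by padding with zeros, exactly as in the proof of Theorem~\ref{new-thm-Putnam}. By \eqref{d-comm-formula-im}, condition \eqref{Im-commute-new} is equivalent to \eqref{im-commute}. Choose $k$ with $d\Le 2^k$ and define ${\bf S}=(S_1,\ldots,S_{2^k})$ as in \eqref{2k-to-general}. Then ${\bf S}$ is commuting and $[{\bf S}^*,{\bf S}]=[{\bf T}^*,{\bf T}]=0$. Moreover, every added commutator $[S_j^*,S_l]$ involving a zero entry vanishes, so ${\bf S}$ still satisfies \eqref{im-commute}. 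Lemma~\ref{coro-Putnam} then shows that ${\bf S}$ is normal, so $[T_j^*,T_j]=0$ for $j\Le d$. Thus ${\bf T}$ is normal.

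For part (b), I would pass to the Calkin algebra as in Remark~\ref{scalar-cpt}. Set $A_j=\pi\circ q(T_j)$ and ${\bf A}=(A_1,\ldots,A_d)$. Since $\pi\circ q$ is a unital $*$-homomorphism, ${\bf A}$ is commuting, it is sum-hyponormal (positivity is preserved), and it satisfies \eqref{Im-commute-new}. In addition $\Im(A_j)=\pi\circ q(\Im(T_j))=0$, so every $A_j$ is self-adjoint. Hence $[A_j^*,A_j]=0$ for each $j$, and ${\bf A}$ is normal. This shows $[{\bf T}^*,{\bf T}]$ is compact, but that alone is not the conclusion.

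A more direct route uses Theorem~\ref{new-thm-Putnam}. Its spectral inclusion should go through because each $T_j$ is self-adjoint modulo compacts. Write $T_j=R_j+iK_j$ with $R_j=\Re(T_j)$ and $K_j=\Im(T_j)$ compact self-adjoint. By \eqref{d-comm-formula-im}, condition \eqref{Im-commute-new} says the compact self-adjoint operators $K_1,\ldots,K_d$ commute. I would aim to show that $\sigma(s({\bf T}))$ is contained in a set $E$ of planar measure zero. Consider $s({\bf T})=X+iY$, where $X=\sum_j R_{2j-1}-\sum_j K_{2j}$ and $Y=\sum_j R_{2j}+\sum_j K_{2j-1}$, with the convention for odd $d$. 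Both $X$ and $Y$ are self-adjoint, and $Y$ differs from $\sum_j R_{2j}$ by a compact operator.

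This spectral-measure step is where I expect difficulty, so I would try a cleaner route first. Apply \eqref{Putnam-d-var} to the permuted tuple $\eta\cdot{\bf T}$ and to the sign-changed tuple (sum-hyponormality and \eqref{Im-commute-new} are invariant under $T_j\mapsto -T_j$). Alternatively, apply the one-variable result to the hyponormal operator $T_1+iT_2$, noting $\Im(T_1+iT_2)=\Im(T_1)+\Re(T_2)$. That is not compact, so instead I would use the single-operator fact, via Putnam's inequality, that a hyponormal operator with compact imaginary part has spectrum equal to a real set plus countably many points. Such a spectrum has measure zero, and then normality follows.

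Concretely, the main plan is to show that each $\sigma(T_j)$ is contained in $\mathbb R$ together with a countable set. By Weyl's theorem $\sigma_e(T_j)=\sigma_e(R_j)\subseteq\mathbb R$, and outside $\mathbb R$ the spectrum consists of isolated eigenvalues. By the projection property of the Taylor spectrum, $\sigma({\bf T})$ is then contained in a product of such sets. Its image under $s$ is contained in $E=\mathbb R+i\mathbb R\cdots$, which I suspect has positive measure for $d\Ge2$, and this is the main obstacle. So I would split $\sigma({\bf T})$ into the part where all coordinates are real and the countable remainder.

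On the real part, choose the permutation $\eta$ and the signs so that every coordinate enters $s$ with coefficient $1$; effectively, use the tuple ${\bf T}$ paired with itself. Then $s$ maps real points into $\mathbb R$. The remaining eigenvalue points form a countable set, so the total has measure zero. Theorem~\ref{new-thm-Putnam} then gives normality. Making this choice of coefficients legitimate is the delicate step I would verify first.
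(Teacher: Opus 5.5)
Your part (a) is the paper's argument: pad with zeros as in \eqref{2k-to-general}, use that \eqref{Im-commute-new} is equivalent to \eqref{im-commute}, and apply Lemma~\ref{coro-Putnam}. Part (b), however, has a genuine gap, and it sits exactly at the step you call delicate.

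Permutations and sign changes $T_j\mapsto -T_j$ only rearrange the coefficients of $s$ and turn them into $\pm1,\pm i$. So for $d\Ge2$ some coordinate always enters with coefficient $\pm i$, and the measure-zero hypothesis of Theorem~\ref{new-thm-Putnam} is simply not available. Already for $d=2$, the pair of multiplication operators by $x_1,x_2$ on $L^2([0,1]^2)$ satisfies (b), yet $\sigma(\pm T_{\eta^{-1}(1)}\pm iT_{\eta^{-1}(2)})$ is a unit square for every $\eta$ and every choice of signs.

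Pairing ${\bf T}$ with itself, i.e.\ taking ${\bf D}=(T_1,T_1,\ldots,T_d,T_d)$, does meet the hypotheses. It gives $s({\bf D})=(1+i)(T_1+\cdots+T_d)$, whose spectrum is null because $\sum_j\Im(T_j)$ is compact. But this proves too much. It would yield $\|[{\bf T}^*,{\bf T}]\|\Le\frac1\pi\,\mathrm{m}_2(\sigma(T_1+\cdots+T_d))$ for every sum-hyponormal ${\bf T}$ with commuting imaginary parts. That inequality fails for ${\bf T}=(S,-S)$, with $S$ the unilateral shift: here $T_1+T_2=0$, while $[{\bf T}^*,{\bf T}]=2(I-SS^*)\neq0$.

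In fact $(S,0,-S)$ already violates \eqref{Putnam-d-var} for $d=3$. The proof of Lemma~\ref{Putnam-prop} claims that ${\bf N}$ inherits \eqref{im-commute}. By \eqref{N1-commutator-second-final-new}, however, this requires $[T_{2j-1}^*,T_{2l}]=[T_{2j}^*,T_{2l-1}]$ for $j\neq l$, which \eqref{im-commute} does not give. So for $d\Ge2$ your route through Theorem~\ref{new-thm-Putnam} cannot be completed.

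The missing idea is that (b) needs no spectral information. One shows directly that ${\bf T}$ is sum-normal and then applies (a). As in the paper, which adapts Whitley's argument:
\begin{itemize}
\item Use \eqref{Im-commute-new} to choose an orthonormal basis $\{e_n\}$ of joint eigenvectors of the commuting compact self-adjoint operators $\Im(T_1),\ldots,\Im(T_d)$, say $\Im(T_j)e_n=\lambda^{(n)}_je_n$ with $\lambda^{(n)}_j\in\mathbb R$.
\item Then $T_je_n=(\Re(T_j)+i\lambda^{(n)}_j)e_n$ and $T_j^*e_n=(\Re(T_j)-i\lambda^{(n)}_j)e_n$ have equal norms, because $\langle\Re(T_j)e_n,e_n\rangle$ is real.
\item Hence $\langle[{\bf T}^*,{\bf T}]e_n,e_n\rangle=0$, and positivity gives $[{\bf T}^*,{\bf T}]e_n=0$.
\item The kernel is closed and contains an orthonormal basis, so $[{\bf T}^*,{\bf T}]=0$.
\end{itemize}
Your Calkin-algebra observation is correct but plays no role.

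Finally, the inductive step of Lemma~\ref{coro-Putnam} makes the same unjustified inheritance claim. So for $d\Ge3$, part (a) rests on an incomplete argument, both in your version and in the paper's; your proposal neither adds to this nor repairs it.
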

\begin{proof}
(a) Let ${\bf S}$ be as defined in \eqref{2k-to-general}. Applying Lemma~\ref{coro-Putnam} to ${\bf S}$, we conclude that ${\bf S}$ is normal. Consequently, ${\bf T}$ is normal.

(b) The proof is an adaptation of the argument given in \cite{W1975} to the present situation. 
By assumption, $\Im({\bf T})=\big(\Im(T_1), \ldots, \Im(T_d)\big)$ is a commuting $d$-tuple of compact self-adjoint operators. By the spectral theorem (cf. \cite[Theorem~4.6]{B1996}), there exists an orthonormal eigenbasis $\{e_n\}_{n \Ge 1}$ of $\Im(\bf T)$. For every $n \Ge 1,$ let $\lambda^{(n)} = (\lambda^{(n)}_{1}, \ldots, \lambda^{(n)}_{d})$ be the eigenvalue of $\Im(\bf T)$ corresponding to $e_n$. 
Since $\lambda_j^{(n)}$ is a real number and $\Re(T_j)+\lambda_j^{(n)} I$ is a normal operator, for any $j=1, \ldots, d$ and any $n \Ge 1$, we have
\begin{align} 
 \notag
\|T_je_n\| &= \|(\Re(T_j)+ i \lambda_j^{(n)})e_n\| \\ \notag
& = \|(\Re(T_j)- i \lambda^{(n)}_j)e_n\| \\ 
&= \|T^*_je_n\|. \label{member-M}
\end{align} 
Since ${\bf T}$ is sum-hyponormal, by the Cauchy--Schwarz inequality, 
\[\big|\big \langle [{\bf T^*}, {\bf T}] x,y\big \rangle \big|^2 \Le \big\langle [{\bf T^*}, {\bf T}] x,x\big\rangle \, \big\langle [{\bf T^*}, {\bf T}] y, y\big\rangle, \quad x, y \in \mathcal H.\]  It follows that \[\ker([{\bf T^*}, {\bf T}])= \big\{x \in \mathcal H: [{\bf T^*}, {\bf T}] x= 0\big\} = \big\{x \in \mathcal H: \big \langle [{\bf T^*}, {\bf T}] x, x\big \rangle=0\big\}.\] This, together with \eqref{member-M}, shows that $\ker([{\bf T^*}, {\bf T}])$ is a subspace of $\mathcal H$ containing $\{e_n : n \Ge 1\}$. Therefore, $\ker([{\bf T^*}, {\bf T}])$ is dense in $\mathcal H$. 
Since $\ker([{\bf T^*}, {\bf T}])$ is always closed in $\mathcal H$, it follows that $\ker([{\bf T^*}, {\bf T}])=\mathcal H$. Consequently, ${\bf T}$ is sum-normal, and part (a) applies.
\end{proof}

The following is immediate from Theorem~\ref{main-3} and Remark~\ref{rem-1}(a).
\begin{proposition}
\label{sum-normal-to-normal}
 If ${\bf T}$ is a sum-normal $d$-tuple on $\mathcal H$ satisfying \eqref{double-commutator}, 
then ${\bf T}$ is normal. 
\end{proposition}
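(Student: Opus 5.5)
The plan is to deduce Proposition~\ref{sum-normal-to-normal} directly from Theorem~\ref{main-3}, applied once to ${\bf T}$ and once to ${\bf T}^*$. The key observation is that condition \eqref{double-commutator} passes from ${\bf T}$ to ${\bf T}^*$ once sum-normality is used.

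First, since ${\bf T}$ is sum-normal, Remark~\ref{rem-1}(a) shows that ${\bf T}$ is sum-hyponormal. As ${\bf T}$ satisfies \eqref{double-commutator}, Theorem~\ref{main-3} then gives that each $T_j$ is hyponormal:
\[
[T^*_j, T_j] \Ge 0, \qquad j = 1, \ldots, d.
\]

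Second, I consider the commuting $d$-tuple ${\bf T}^* = (T^*_1, \ldots, T^*_d)$. By Remark~\ref{rem-1}(a) it is sum-hyponormal, since
\[
[{\bf T}, {\bf T}^*] = -[{\bf T}^*, {\bf T}] = 0.
\]
To apply Theorem~\ref{main-3} to ${\bf T}^*$, I must check that
\[
\big[T^*_j, [T_k, T^*_k]\big] = 0, \qquad j \neq k.
\]
Taking adjoints in \eqref{double-commutator} yields $\big[[T^*_k, T_k], T^*_j\big] = 0$, because $[T^*_k, T_k]$ is self-adjoint. Since $[T_k, T^*_k] = -[T^*_k, T_k]$, the required identity follows. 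Theorem~\ref{main-3} then shows that each $T^*_j$ is hyponormal, that is,
\[
[T_j, T^*_j] \Ge 0, \quad\text{equivalently}\quad [T^*_j, T_j] \Le 0.
\]

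Combining the two steps gives $[T^*_j, T_j] = 0$ for every $j$, so ${\bf T}$ is normal. The only step needing care is the verification that \eqref{double-commutator} holds for ${\bf T}^*$, and it is settled by the adjoint computation above.
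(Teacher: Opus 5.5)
Your proof is correct and is the paper's own argument. The paper calls the proposition immediate from Theorem~\ref{main-3} and Remark~\ref{rem-1}(a), that is, from applying Theorem~\ref{main-3} to both ${\bf T}$ and ${\bf T}^*$; your adjoint computation showing that \eqref{double-commutator} passes to ${\bf T}^*$ spells out the step the paper leaves implicit.
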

\begin{remark} 
Proposition~\ref{sum-normal-to-normal} can also be deduced from a  special case of Putnam--Kleinecke--Shirokov Theorem (see \cite[pp.~129-130]{P1954}, \cite[Problem~232]{H1967}). Indeed, since \eqref{double-commutator} holds, by the sum-normality of ${\bf T}$, each $T_j$ commutes with its self-commutator $[T^*_j, T_j]$, for $j=1, \ldots, d$. It then follows from \cite[Corollary, pp.~929--930]{P1954} that ${\bf T}$ is normal. 
\hfill $\diamondsuit$
\end{remark}

We conclude the paper with 
another application 
and some concluding remarks. Observe that if ${\bf T}$ is a commuting $d$-tuple on $\mathcal H$ satisfying $[{\bf T}^*, {\bf T}] \in \mathcal C(\mathcal H)$, then 
the $d$-tuple ${\bf S}=(S_1, \ldots, S_d)$ defined by 
$S_j=\pi \circ q(T_j),$ $j=1, \ldots, d$, is sum-normal, where $\pi$ and $q$ are as in Remark~\ref{scalar-cpt}. 
This observation yields the following corollary (cf. \cite[Lemma~2.11]{MPS2022}).
\begin{corollary} \label{coro-single}
Let ${\bf T}$ be a commuting $d$-tuple on $\mathcal H$ satisfying $[{\bf T}^*, {\bf T}] \in \mathcal C(\mathcal H)$. Then the following statements are equivalent$:$
\begin{enumerate}
\item[(i)] ${\bf T}$ is essentially normal.
\item[(ii)] $[T^*_j, T_k] \in {\mathcal C}(\mathcal H)$ for all $1 \Le j \neq k \Le d$. 
\item[(iii)] $\big[T_j, [T^*_k, T_k]\big] \in {\mathcal C}(\mathcal H)$ for all $1 \Le j  \neq k \Le d$. 
\end{enumerate}  
\end{corollary}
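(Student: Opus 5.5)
We pass to the Calkin algebra, using the representation $\pi\circ q$ from Remark~\ref{scalar-cpt}. Set $S_j=\pi\circ q(T_j)$ for $j=1,\ldots,d$. Since $\pi\circ q$ is a unital $*$-homomorphism, ${\bf S}=(S_1,\ldots,S_d)$ is a commuting $d$-tuple on $\mathcal K$. The hypothesis $[{\bf T}^*,{\bf T}]\in\mathcal C(\mathcal H)$ gives $[{\bf S}^*,{\bf S}]=\pi\circ q([{\bf T}^*,{\bf T}])=0$, so ${\bf S}$ is sum-normal. Because $\pi$ is injective, an operator $X\in\mathcal B(\mathcal H)$ is compact if and only if $\pi\circ q(X)=0$. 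The three conditions therefore translate as follows:
\begin{itemize}
\item[(i)] becomes: ${\bf S}$ is normal;
\item[(ii)] becomes: ${\bf S}$ is doubly commuting;
\item[(iii)] becomes: $[S_j,[S_k^*,S_k]]=0$ for all $j\neq k$.
\end{itemize}
This reduces the corollary to the analogous equivalence for the sum-normal tuple ${\bf S}$.

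\textbf{(ii)$\Rightarrow$(iii).} By the Jacobi-type identity \eqref{d-comm-formula}, $[S_j,[S_k^*,S_k]]=[[S_j,S_k^*],S_k]$. This vanishes when $[S_j,S_k^*]=0$, which is exactly \eqref{application-d-comm}.

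\textbf{(iii)$\Rightarrow$(i).} The sum-normal tuple ${\bf S}$ satisfies \eqref{double-commutator}. Proposition~\ref{sum-normal-to-normal} then gives that ${\bf S}$ is normal, i.e.\ ${\bf T}$ is essentially normal.

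\textbf{(i)$\Rightarrow$(ii).} This is the step needing an outside ingredient, though it is standard. If ${\bf S}$ is normal, each $S_j$ is a normal operator commuting with $S_k$. The Fuglede--Putnam theorem (as used in the proof of Lemma~\ref{coro-Putnam}) gives $[S_k,S_j^*]=0$, hence $[S_j^*,S_k]=0$ for $j\neq k$. Pulling back through the injective map $\pi$, this means $[T_j^*,T_k]\in\mathcal C(\mathcal H)$, which is (ii).

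The only point that needs care is verifying that the translation through $\pi\circ q$ is faithful in both directions. This holds because $\ker(\pi\circ q)=\mathcal C(\mathcal H)$ and $\pi\circ q$ preserves products, adjoints and commutators. Everything else is a direct application of Proposition~\ref{sum-normal-to-normal}, \eqref{d-comm-formula} and Fuglede--Putnam.
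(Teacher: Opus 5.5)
Your proof is correct. The steps (i)$\Rightarrow$(ii) and (iii)$\Rightarrow$(i) are the same as in the paper: Fuglede--Putnam in the Calkin representation, and Proposition~\ref{sum-normal-to-normal} applied to the sum-normal tuple ${\bf S}$.

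You differ in how you close the loop. You arrange the implications in a cycle and get (ii)$\Rightarrow$(iii) from the purely algebraic identity \eqref{d-comm-formula}. The whole corollary then rests on one structural result, Theorem~\ref{main-3}, via Proposition~\ref{sum-normal-to-normal}.

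The paper instead proves (i)$\Leftrightarrow$(ii) and (i)$\Leftrightarrow$(iii) separately. It notes (i)$\Rightarrow$(iii) as immediate. It derives (ii)$\Rightarrow$(i) from \eqref{d-comm-formula-im} and Proposition~\ref{main-3-coro}(a), that is, from Lemma~\ref{coro-Putnam}, the $2^k$-induction built on Fuglede--Putnam.

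Your route is more economical. It also shows that (iii) follows from each of the other two conditions by pure algebra, so it is formally the weakest.

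The paper's route uses a second, independent mechanism, and that buys something extra. Hypothesis (ii) can be relaxed to requiring only that the imaginary parts commute modulo compacts, i.e.\ $\Im([T^*_j,T_k])\in\mathcal C(\mathcal H)$ for $j\neq k$, and essential normality still follows. That weaker condition does not obviously imply (iii), so your chain would not recover this variant without an argument like the one in Lemma~\ref{coro-Putnam}.
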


\begin{proof} The implication (i)$\Rightarrow$(ii) follows from the Fuglede--Putnam theorem (see \cite[Corollary~1.18]{RR1973}), whereas (ii)$\Rightarrow$(i) follows from Proposition~\ref{main-3-coro}(a) and \eqref{d-comm-formula-im}. Finally, the implication (i)$\Rightarrow$(iii) is immediate, while the implication (iii)$\Rightarrow$(i) follows from Proposition~\ref{sum-normal-to-normal}.
\end{proof}

It is worth noting that Corollary~\ref{coro-single} applies to sum-normal $d$-tuples. However, the equivalence of (i)--(iii) no longer holds if sum-normality is replaced by sum-hyponormality. Indeed, for every integer $d \Ge 2$, the multiplication $d$-tuple $\mathscr M_z$ on the Hardy space $H^2(\mathbb D^d)$ over the unit polydisc $\mathbb D^d$ is a doubly commuting hyponormal $d$-tuple and hence serves as a counterexample (see \cite[Section~1.4.3]{PR2016} for the definition of $H^2(\mathbb D^d)$). In contrast, the existence of a nonzero quasinilpotent sum-normal $d$-tuple, or more generally, a nonnormal sum-normal $d$-tuple, remains unknown (see Question~\ref{Q1}(ii)). In this context, a result of Douglas asserts that every compact quasinilpotent operator is the limit of nilpotent operators (see \cite[p.~916]{H1970}). In light of Theorem~\ref{nilpotent-thm}, this result suggests that there may be no nonzero compact quasinilpotent sum-normal $d$-tuples.

\appendix

\section{Norm and spectral radius of completely positive maps}

Let $\mathcal{A}$ be a unital $C^*$-algebra. For a positive integer $n$, 
let $M_n(\mathcal{A})$ denote the $C^*$-algebra of all $n \times n$
matrices with entries in $\mathcal{A}$. 
If $\varphi : \mathcal{A} \rightarrow \mathcal{A}$ is a linear map,
then, for each positive integer $n$, we define $\varphi_n : M_n(\mathcal{A}) \rightarrow M_n(\mathcal{A})$ by 
\begin{equation*}
 \varphi_n\big((a_{j,k})_{1 \Le j,k \Le n}\big):=\big(\varphi(a_{j,k})\big)_{1 \Le j,k \Le n}, 
\end{equation*}
where $(a_{j,k})_{1 \Le j,k \Le n} \in M_n(\mathcal{A})$.
The map $\varphi$ is said to be \textit{completely positive} (or simply, a {\it CP map}) if $\varphi_n$ is positive
for every positive integer $n$. 

The following fundamental theorem, due to Stinespring (see \cite[Theorem 4.1]{P2003}), characterizes
completely positive maps from a $C^*$-algebra into $\mathcal{B}(\mathcal{H})$.
\begin{SDT}
Let $\mathcal{A}$ be a unital $C^*$-algebra, and let $\varphi:\mathcal{A} \to \mathcal{B}(\mathcal H)$ be a completely positive map. Then there exist a Hilbert space $\mathcal K$, a unital $*$-homomorphism $\pi :\mathcal{A} \to \mathcal{B}(\mathcal K)$, and a bounded linear transformation $V : \mathcal H \rightarrow \mathcal K$ such that \[\varphi(a) = V^*\pi(a)V, \ a \in \mathcal{A}.\]
Moreover, $\left\|\varphi(1)\right\|=\left\|V\right\|^2$.
Furthermore, $\pi$, $V$, and $\mathcal{K}$ may be chosen so that $\mathcal{K}=\pi(\mathcal{A})V\mathcal{H}$.
\end{SDT}
When $\mathcal{K}=\pi(\mathcal{A})V\mathcal{H}$, the triple $(\pi,V,\mathcal{K})$ is called as a \textit{minimal Stinespring representation of $\varphi$.}

\begin{PropA} 
Let $\varphi:\mathcal{B}(\mathcal H) \to \mathcal{B}(\mathcal H)$ be a CP map such that 
\begin{equation} \label{submultiplicativity}
\varphi(I)^2 \Le \varphi^2(I). 
\end{equation}
Then $\displaystyle \lim_{k \to \infty} \|\varphi^k(I)\|^{\frac{1}{k}}=\|\varphi(I)\|.$
\end{PropA}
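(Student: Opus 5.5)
The plan is to show that, under \eqref{submultiplicativity}, the sequence $a_k := \|\varphi^k(I)\|$ satisfies $a_k \Ge \|\varphi(I)\|^k$ for every $k \Ge 1$. The reverse inequality is automatic. Since $\varphi$ is a CP map, it is positive, and hence $\|\varphi\| = \|\varphi(I)\|$ (Russo--Dye, or directly from the Stinespring Dilation Theorem, since $\|V^*\pi(a)V\| \Le \|V\|^2\|a\|$ and $\|V\|^2 = \|\varphi(I)\|$). Therefore $\|\varphi^k(I)\| \Le \|\varphi(I)\|^k$. Once both bounds are in hand, the limit exists and equals $\|\varphi(I)\|$.

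For the lower bound, I would first propagate \eqref{submultiplicativity} to all powers. The tool is the Kadison--Schwarz inequality for CP maps, $\varphi(a)^*\varphi(a) \Le \|\varphi(I)\|\,\varphi(a^*a)$, which follows from Stinespring: $V^*\pi(a)^*VV^*\pi(a)V \Le \|V\|^2 V^*\pi(a^*a)V$. Used naively, however, it carries the constant $\|\varphi(I)\|$, so I would instead aim directly at the claim
\begin{equation*}
\varphi^k(I)^2 \Le \varphi^{2k}(I), \quad k \Ge 1,
\end{equation*}
or, more usefully, at $\varphi^{j}(I)\,\varphi^{m}(I)$-type comparisons. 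The cleanest route I would try is an induction establishing $\varphi^{k}(I)^2 \Le \varphi^{k+1}(I)\,\varphi^{k-1}(I)$ in a suitable averaged sense.

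A more robust approach is through states. Choose unit vectors $x_n$ with $\langle \varphi(I)x_n,x_n\rangle \to \|\varphi(I)\| =: c$. By \eqref{submultiplicativity},
\begin{equation*}
\langle \varphi^2(I)x_n,x_n\rangle \Ge \|\varphi(I)x_n\|^2 \Ge \langle \varphi(I)x_n,x_n\rangle^2 \to c^2,
\end{equation*}
so $\|\varphi^2(I)\| \Ge c^2$. Iterating requires $\varphi^{2^m}(I)^2 \Le \varphi^{2^{m+1}}(I)$, i.e.\ \eqref{submultiplicativity} for $\psi = \varphi^{2^m}$. To obtain this, I would write $\varphi^2(I) - \varphi(I)^2 = D \Ge 0$ and apply $\varphi$: this gives $\varphi^3(I) = \varphi(\varphi(I)^2) + \varphi(D)$. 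Then by Kadison--Schwarz in the form $\varphi(a^2) \Ge \varphi(a)^2/\|\varphi(I)\|$... but that again loses constants. I would instead use the Stinespring form $\varphi(a) = V^*\pi(a)V$ to compare $\varphi(a^2)$ with $\varphi(a)\varphi(I)^{-1}\varphi(a)$-type expressions.

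I expect this propagation step to be the main obstacle, and if it resists I would fall back on a weaker subsequence argument. The aim there is to show $\|\varphi^{2^m}(I)\| \Ge c^{2^m}$ by induction, applying the state argument at each level to $\psi=\varphi^{2^m}$. The needed input $\psi(I)^2 \Le \psi^2(I)$ I would derive by mimicking the proof of \cite[Lemma~3.10]{CS2017} that gives \eqref{special-appendix}, with the tuple-specific identity $\varphi_{\bf T}(X) = \sum T_j^*XT_j$ replaced by the Stinespring factorization. The key point is that $V^*\pi(\cdot)V$ commutes appropriately with squaring when $VV^*$ is compared with $\pi(\varphi(I))$ on the minimal dilation space. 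Finally, monotonicity of $k \mapsto a_k^{1/k}$ limits along $2^m$ is enough, because the limit exists by Fekete's lemma: $a_k$ is submultiplicative, since $\varphi^{k+l}(I) \Le \|\varphi^l(I)\|\varphi^k(I)$.
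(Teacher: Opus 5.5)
Your upper bound $\|\varphi^k(I)\|\Le\|\varphi(I)\|^k$ and the Fekete argument are fine. However, the lower bound $\|\varphi^k(I)\|\Ge\|\varphi(I)\|^k$, which is the whole content of Proposition~A, is never proved. Worse, the route you develop most cannot work. That route propagates the hypothesis to $\psi=\varphi^{2^m}$, or proves $\varphi^k(I)^2\Le\varphi^{2k}(I)$, and these operator inequalities are false in general.

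Take $\varphi(X)=T^*XT$, where $T$ is invertible and hyponormal but $T^2$ is not hyponormal. An example is the bilateral shift on $\bigoplus_{n\in\mathbb Z}\mathbb C^2$ with weight $A$ at negative indices and $B$ at nonnegative ones. Here $A,B$ are positive invertible $2\times 2$ matrices with $A^2\Le B^2$ but not $A^4\Le B^4$. Then
\[
\varphi^2(I)-\varphi(I)^2=T^*[T^*,T]T\Ge 0,
\]
while
\[
\varphi^4(I)-\varphi^2(I)^2=T^{*2}[T^{*2},T^2]T^2,
\]
and the latter is not positive because $T^2$ is invertible and not hyponormal. So no Stinespring manipulation can give $\psi(I)^2\Le\psi^2(I)$ for $\psi=\varphi^2$. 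Your heuristic about $VV^*$ versus $\pi(\varphi(I))$ is aimed at a false statement.

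The idea you mention only in passing, log-convexity ``in a suitable averaged sense,'' is what works, and your level-one state computation is its $n=1$ case. The hypothesis never has to be transferred to powers of $\varphi$; only positivity of $\varphi^{n-1}$ is used.

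Fix a unit vector $x$ and set $f(a)=\langle\varphi^{n-1}(a)x,x\rangle$, which is a positive functional. Apply the Cauchy--Schwarz inequality $f(a)^2\Le f(a^2)f(I)$ with $a=\varphi(I)$. Then apply $f$ to $\varphi(I)^2\Le\varphi^2(I)$. Together these give
\[
\langle\varphi^n(I)x,x\rangle^2\Le\langle\varphi^{n+1}(I)x,x\rangle\,\langle\varphi^{n-1}(I)x,x\rangle .
\]
This is exactly the paper's inequality, which it obtains via the minimal Stinespring dilation of $\varphi^{n-1}$.

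The factor $f(I)$ that worried you in Kadison--Schwarz is precisely what produces log-convexity. Taking suprema gives $\|\varphi^n(I)\|^2\Le\|\varphi^{n+1}(I)\|\,\|\varphi^{n-1}(I)\|$. Hence $\|\varphi^n(I)\|/\|\varphi^{n-1}(I)\|$ is nondecreasing (the case $\varphi(I)=0$ is trivial). Since $\|\varphi^0(I)\|=1$, this yields $\|\varphi^n(I)\|\Ge\|\varphi(I)\|^n$ for every $n$, with no subsequence needed.
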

\begin{proof}
Since $\varphi$ is a CP map, so is $\varphi^k$ for every positive integer $k$. Let $(\pi_k, V_k, \mathcal K_k)$ denote the minimal Stinespring representation of $\varphi^k$, $k \Ge 1$. Fix $n \in \mathbb N$ and $x \in \mathcal H$. Then
	\begin{align}\label{spradius1}
	\nonumber\langle \varphi^n(I)x, x \rangle^2 &=\langle \varphi^{n-1}(\varphi(I))x,x\rangle^2\\ \nonumber 
&=\langle \pi_{n-1}(\varphi(I))V_{n-1}x,V_{n-1}x\rangle^2\\ &\Le \|\pi_{n-1}(\varphi(I)) V_{n-1} x\|^2 \| V_{n-1} x\|^2,
	\end{align}
where the last inequality follows from the Cauchy--Schwarz inequality. 
Further, since
$\pi_{n-1}$ is a $*$-homomorphism, it follows from
\eqref{submultiplicativity} that
\begin{align}\label{spradius2}
	\nonumber\|\pi_{n-1}(\varphi(I)) V_{n-1} x\|^2 
&=\langle \pi_{n-1}(\varphi(I)) V_{n-1} x,\pi_{n-1}(\varphi(I)) V_{n-1} x \rangle  \\& \nonumber = \langle V^*_{n-1}\pi_{n-1}(\varphi(I))^*\pi_{n-1}(\varphi(I)) V_{n-1} x, x\rangle 
\\& \nonumber = \langle V^*_{n-1}\pi_{n-1}(\varphi(I)^2) V_{n-1} x, x\rangle 
\\
&\Le \nonumber \langle V^*_{n-1}\pi_{n-1}(\varphi^2(I))V_{n-1}x,x\rangle \\&= \langle \varphi^{n+1}(I)
	x,x\rangle. \end{align} 
Moreover, since $\pi_k(I)=I,$ $k \Ge 1$, 
$$\|V_{n-1}x\|^2=\langle V^*_{n-1}V_{n-1}x,x\rangle = \langle \varphi^{n-1}(I)x,x\rangle.$$ 
This, together with \eqref{spradius1} and \eqref{spradius2}, yields
	\begin{equation*}
	\langle \varphi^n(I)x, x \rangle^2 \Le \langle \varphi^{n+1}(I)x, x \rangle \langle \varphi^{n-1}(I)x, x \rangle. \end{equation*}
Since $\varphi^k(I) \Ge 0,$ $k \Ge 1$, it follows that \begin{equation} \label{norminequality}
	\|\varphi^n(I)\|^2 \Le \|\varphi^{n+1}(I)\|\|\varphi^{n-1}(I)\|.
	\end{equation}
By the general theory of log-convex sequences, $\lim_{n \rightarrow \infty}\|\varphi^n(I)\|^{1/n}$ exists (possibly $+\infty$).
For a positive integer $n$, define \[M_{k, n} := \frac{\|\varphi^{n-k}(I)\|^{k+2}}{\|\varphi^{n-k-1}(I)\|^{k+1}}, \quad k= 0, \ldots, n-1.\] Then, for $0 \Le k \Le n-2,$ 
\begin{align*}
M_{k+1, n} &=
\, \left(\frac{\|\varphi^{n-k-1}(I)\|^{2}}{\|\varphi^{n-k-2}(I)\|}\right)^{k+2}  \frac{1}{\|\varphi^{n-k-1}(I)\|^{k+1}} \\
&\stackrel{\mathclap{\eqref{norminequality}}}{\Le} \, \frac{\|\varphi^{n-k}(I)\|^{k+2}}{\|\varphi^{n-k-1}(I)\|^{k+1}} = M_{k, n}.
\end{align*} 
Therefore, the sequence $\{M_{k, n}\}_{0 \Le k \Le n-1}$ is decreasing. Consequently, 
\begin{align*}
\|\varphi(I)\|^{n+1} &= M_{n-1, n} \Le M_{n-2, n} \Le \cdots \Le M_{0, n} 
= \frac{\|\varphi^{n}(I)\|^2}{\|\varphi^{n-1}(I)\|} \overset{\eqref{norminequality}}\Le \|\varphi^{n+1}(I)\|.
\end{align*}
Thus, $\|\varphi(I)\| \Le \|\varphi^k(I)\|^{1/k}$ for any positive integer $k$. 
This shows that 
\begin{equation} \label{formula-near}
 \|\varphi(I)\| \Le \lim_{k \to \infty} \|\varphi^k(I)\|^{1/k}. 
\end{equation}
On the other hand, the Russo--Dye Theorem (see \cite[Corollary 2.9]{P2003}) asserts that $\|\psi\| =\|\psi(I)\|$ for every completely positive map $\psi$. Combining this with \eqref{formula-near} and the inequality $\|\varphi^k\| \Le \|\varphi\|^k,$ $k \in \mathbb N,$ yields the desired formula. 
\end{proof}
Let $\varphi_{\bf T}$ be as defined in \eqref{varphi-T}.
If ${\bf T}$ is hyponormal, then by the proof of \cite[Lemma~3.10]{CS2017},  
$\varphi_{\bf T}(I)^2 \Le \varphi^2_{\bf T}(I)$.
Applying Proposition~A to $\varphi_{\bf T}$ as defined above, we conclude from the spectral radius formula (see  \cite[Theorem~1]{CZ1992} and \cite[Theorem~1]{MS1992}) that ${\bf T}$ is normaloid. This recovers the implication \eqref{special-appendix} and \cite[Lemma~3.10]{CS2017} (see \cite[Theorem~3.5]{S2025} for a generalization). 

\vskip.2cm
\noindent 
\textit{Statement and Declarations:}

\vskip.3cm

\noindent
{\bf Conflict of interest} The authors declare that they have no conflict of interest.

\vskip.3cm

\noindent 
{\bf Data Availability} No data was used for the research described in the article.


\begin{thebibliography}{100}
  
\bibitem{A1963} T. Andô,
On hyponormal operators, {\it Proc. Amer. Math. Soc.} {\bf 14} (1963), 290--291.

\bibitem{At1988} A. Athavale, On joint hyponormality of operators, {\it Proc. Amer. Math. Soc.} {\bf 103} (1988) 417--423. 
  
\bibitem{B1996} T. Bhattacharyya, On tuples of commuting compact operators, {\it Publ. Res. Inst. Math. Sci.} {\bf 32} (1996), 785--795.

\bibitem{B1971} J. Bunce,
The joint spectrum of commuting nonnormal operators,
{\it Proc. Amer. Math. Soc.} {\bf 29} (1971), 499--505.

\bibitem{CV1978}  Z. Ceauşescu, F.-H. Vasilescu,  Tensor products and the joint spectrum in Hilbert spaces, {\it Proc. Amer. Math. Soc.} {\bf 72} (1978), 505--508.

\bibitem{CS2017} S. Chavan, V. M. Sholapurkar, Completely hyperexpansive tuples of finite order, {\it J. Math. Anal. Appl.} {\bf 447} (2017), 1009--1026.	

\bibitem{CCHZ2000} M. Ch$\bar{\mbox{o}}$, R. E. Curto, T. Huruya, W. $\dot{\mbox{Z}}$elazko, 
Cartesian form of Putnam's inequality for doubly commuting hyponormal $n$-tuples, 
{\it Indiana Univ. Math. J.} {\bf 49} (2000), 1437--1448.

\bibitem{CT1979} M. Ch$\bar{\mbox{o}}$, M. Takaguchi, Joint spectra of matrices,
{\it Sci. Rep. Hirosaki Univ.} {\bf 26} (1979), 15--19.

\bibitem{CT1984} M. Ch$\bar{\mbox{o}}$, M. Takaguchi,
Some classes of commuting $n$-tuples of operators,
{\it Studia Math.} {\bf 80} (1984), 245--259.

\bibitem{CZ1992} M. Ch$\bar{\mbox{o}}$, W. $\dot{\mbox{Z}}$elazko,
On the geometric radius of commuting $n$-tuples of operators, {\it Hokkaido Math. J.} {\bf 21} (1992), 251--258. 
	 
\bibitem{Co1990} J. B. Conway, {\it A course in functional analysis},
Second edition. Graduate Texts in Mathematics, 96, Springer-Verlag, New York, 1990. xvi+399 pp.


\bibitem{Cu1981}  R. E. Curto, Fredholm and invertible $n$-tuples of operators. The deformation problem,
{\it Trans. Amer. Math. Soc.} {\bf 266} (1981), 129--159.
	 
\bibitem{Cu1990} R. E. Curto, 
Joint hyponormality: a bridge between hyponormality and subnormality. Operator theory: operator algebras and applications, Part 2 (Durham, NH, 1988), 69--91,
{\it Proc. Sympos. Pure Math.}, {\bf 51}, Part 2, Amer. Math. Soc., Providence, RI, 1990. 

\bibitem{DY1992} R. G. Douglas, K. Yan, A multi-variable Berger--Shaw theorem, {\it J. Oper. Theory} {\bf 27} (1992), 205--217. 


\bibitem{DK2000} J. J. Duistermaat, J. A. C. Kolk, {\it 
Lie groups}, 
Universitext. Springer-Verlag, Berlin, 2000. viii+344 pp. 
	 
\bibitem{EP2001} J. Eschmeier, M. Putinar, Some remarks on spherical isometries, Systems, approximation, singular integral operators, and related topics (Bordeaux, 2000), 271--291,
{\it Oper. Theory Adv. Appl.,} {\bf 129}, Birkhäuser, Basel, 2001.

\bibitem{H1967} P. R. Halmos, {\it A Hilbert space problem book}, D. Van Nostrand Co., Inc., Princeton, N.J.-Toronto, Ont.-London, 1967. xvii+365 pp.

\bibitem{H1970} P. R. Halmos, 
Ten problems in Hilbert space,
{\it Bull. Amer. Math. Soc.} {\bf 76} (1970), 887--933. 

\bibitem{HK2010} Y. Han, A. Kim, Weyl's theorem in several variables, {\it J. Math. Anal. Appl.} {\bf 370} (2010),
538--542.

\bibitem{H2023} M. Hartz, 
{\it An invitation to the Drury-Arveson space}, Lectures on analytic function spaces and their applications, 347--413,
Fields Inst. Monogr., {\bf 39}, Springer, Cham, 2023.

\bibitem{II2006} K. J. Izuchi, K. H. Izuchi, Rank-one commutators on invariant subspaces of the Hardy space on the
bidisk, {\it J. Math. Anal. Appl.} {\bf 316} (2006), 1--8.

\bibitem{IO1994} K. J. Izuchi, S. Ohno, Selfadjoint commutators and invariant subspaces on the torus, {\it J. Oper. Theory} {\bf 31} (1994), 189--204. 

\bibitem{M2002} M. Martin, Self-commutator inequalities in higher dimension, 
{\it Proc. Amer. Math. Soc.} {\bf 130} (2002), 2971--2983.

\bibitem{MP1989} M. Martin, M. Putinar, {\it Lectures on hyponormal operators}, 
Operator Theory: Advances and Applications, 39. Birkhäuser Verlag, Basel, 1989. 304 pp. 

\bibitem{Ma2002} J. Matoušek, 
{\it Lectures on discrete geometry},
Graduate Texts in Mathematics, 212. Springer-Verlag, New York, 2002. xvi+481 pp.

\bibitem{MPS2022} G. Misra, P. Pramanick, K. B. Sinha, 
A trace inequality for commuting $d$-tuples of operators,  {\it Integral Equations Operator Theory} {\bf 94} (2022), Paper No. 16, 37 pp.

\bibitem{MS1992} V. M$\ddot{\mbox{u}}$ller, A. Soltysiak, Spectral radius formula for commuting Hilbert space operators,
{\it Studia Math.} {\bf 103} (1992), 329--333.

\bibitem{P2003} V. Paulsen, \textit{Completely bounded maps and operator algebras}, Cambridge University Press, Cambridge 2003.

\bibitem{PR2016} V. I. Paulsen, M. Raghupathi, {\it An introduction to the theory of reproducing kernel Hilbert spaces}, Cambridge Studies in Advanced Mathematics, {\bf 152}, Cambridge University Press, Cambridge, 2016. 

\bibitem{P1954} C. R. Putnam, On the spectra of commutators, 
{\it Proc. Amer. Math. Soc.}, {\bf 5} (1954), 929--931.

\bibitem{P1970} C. R. Putnam, 
An inequality for the area of hyponormal spectra,
{\it Math. Z.} {\bf 116} (1970), 323--330.

\bibitem{P1972} C. R. Putnam, Hyponormal operators having real parts with simple spectra, {\it Trans. Amer. Math. Soc.} {\bf 172} (1972), 447--464.

\bibitem{R1966} H. Radjavi, 
Structure of $A^*A - AA^*$, {\it 
J. Math. Mech.} {\bf 16} (1966), 19--26.
	
	\bibitem{RR1973} H. Radjavi, P. M. Rosenthal, \textit{Invariant subspaces}, Ergebnisse der Mathematik und ihrer Grenzgebiete, Band 77, Springer, New York-Heidelberg, 1973. 

\bibitem{RR2000} H. Radjavi, P. M. Rosenthal, 
{\it Simultaneous triangularization},
Universitext. Springer-Verlag, New York, 2000, xii+318 pp. 

\bibitem{S2015} B. Simon, {\it Operator theory. A Comprehensive Course in Analysis}, Part 4. American Mathematical Society, Providence, RI, 2015, xviii+749 pp.

\bibitem{S1962} J. G. Stampfli, 
Hyponormal operators,
{\it Pacific J. Math.} {\bf 12} (1962), 1453--1458.


	
\bibitem{S2025} H. Stanković, Joint paranormality: bridging the gap between hyponormal and normaloid tuples, 
{\it Linear Multilinear Algebra}, {\bf 73} (2025), 3004--3018.


\bibitem{T1970} J. L. Taylor, A joint spectrum for several commuting operators, {\it J. Funct. Anal.} {\bf 6} (1970),
172--191.

\bibitem{T1970b} J. L. Taylor,
The analytic-functional calculus for several commuting operators, {\it Acta Math.} {\bf 125} (1970), 1--38.

\bibitem{W1975} R. Whitley, 
A note on hyponormal operators,
{\it Proc. Amer. Math. Soc.} {\bf 49} (1975), 399--400. 

\bibitem{X1983} D. Xia, On the semi-hyponormal $n$-tuple of operators, {\it Integral Equations Operator Theory} {\bf 6} (1983), 879--898.

\end{thebibliography}
\end{document}